\documentclass[a4paper,11pt]{amsart}
\pdfoutput=1

\usepackage[utf8]{inputenc}
\usepackage{mathtools,amssymb}
\usepackage{hyperref}
\usepackage{xcolor,graphicx}
\usepackage{mathrsfs}
\usepackage{enumitem}


\graphicspath{{images/}}

\raggedbottom

\newtheorem{theorem}{Theorem}[section]
\newtheorem{lemma}[theorem]{Lemma}

\newtheorem{remark}[theorem]{Remark}

\makeatletter
\@namedef{subjclassname@2020}{\textup{2020} Mathematics Subject Classification}
\makeatother
\title[The geodesic ray transform on reversible Finsler manifolds]{The geodesic ray transform on spherically symmetric reversible Finsler manifolds}
\keywords{Inverse problems, geodesic ray transform, integral geometry}
\subjclass[2020]{44A12, 53A99, 86A22}

\author{Joonas Ilmavirta}
\thanks{Department of Mathematics and Statistics, University of Jyv\"askyl\"a, P.O. Box 35 (MaD) FI-40014 University of Jyv\"askyl\"a, Finland; \href{mailto:joonas.ilmavirta@jyu.fi}{joonas.ilmavirta@jyu.fi}}
\author{Keijo M\"onkk\"onen}
\thanks{Department of Mathematics and Statistics, University of Jyv\"askyl\"a, P.O. Box 35 (MaD) FI-40014 University of Jyv\"askyl\"a, Finland; \href{mailto:keijo.m.t.monkkonen@jyu.fi}{keijo.m.t.monkkonen@jyu.fi}}

\date{\today}

\usepackage{graphicx}

\newcommand{\R}{{\mathbb R}}
\newcommand{\Z}{{\mathbb Z}}

\newcommand{\der}{{\mathrm d}}

\newcommand{\geod}{\mathcal{I}}
\newcommand{\bigoh}{\mathcal{O}}

\newcommand{\NTR}[1]{}


\newcommand{\abel}{\mathcal{A}}


\newcommand{\abs}[1]{\left\lvert #1 \right\rvert}
\newcommand{\aabs}[1]{\left\lVert #1 \right\rVert}

\begin{document}
\maketitle

\begin{abstract}
We show that the geodesic ray transform is injective on scalar functions on spherically symmetric reversible Finsler manifolds where the Finsler norm satisfies a Herglotz condition. We use angular Fourier series to reduce the injectivity problem to the invertibility of generalized Abel transforms and by Taylor expansions of geodesics we show that these Abel transforms are injective. Our result has applications in linearized boundary rigidity problem on Finsler manifolds and especially in linearized elastic travel time tomography.

\end{abstract}

\section{Introduction}
\NTR{We have corrected the grammatical errors and typos which were pointed out by the referees.}
In this paper we study the following mathematical inverse problem arising in integral geometry: if we know the integrals of a scalar function~$f$ over all geodesics of a Finsler manifold $(M, F)$, can we determine $f$? 
Since the problem is linear, we can formulate it in terms of the kernel of the\NTR{Added "of the kernel".} geodesic ray transform~$\geod$: if $\geod f(\gamma)=0$ for all geodesics~$\gamma$, does it follow that $f=0$? In other words, is the geodesic ray transform~$\geod$ injective on scalar fields?
This inverse problem (and its generalization to tensor fields) has been usually studied on Riemannian manifolds and a variety of results under different types of assumptions is known in the Riemannian setting~\cite{IM:integral-geometry-review, PSU-tensor-tomography-progress, SHA-integral-geometry-tensor-fields}. We show that in the case of spherical symmetry, reversibility and a Herglotz condition the answer is positive for Finsler manifolds as well:~$\geod$ is injective on scalar fields.

A Finsler norm~$F$ is a non-negative function $F\colon TM\to [0, \infty)$ such that for every $x\in M$ the map $y\mapsto F(x, y)$ defines a Minkowski norm in~$T_xM$ (see section~\ref{sec:finslermanifolds} for details). 
We focus on spherically symmetric and reversible Finsler norms. We show that if $M\subset\R^n$ is an annulus centered at the origin and~$F$ is a spherically symmetric reversible Finsler norm on~$M$ which satisfies the Herglotz condition (see equation~\eqref{eq:herglotz} and section~\ref{sec:herglotz}), then the geodesic ray transform~$\geod$ is injective on $L^2$-functions (see section~\ref{sec:maintheorem} and theorem~\ref{thm:maintheorem}). This generalizes earlier Riemannian results in~\cite{deI:abel-transforms-x-ray-tomography, RO-herglotz-linearized} to the Finslerian case and our theorem can be seen as a Helgason-type support theorem on Finsler manifolds (see e.g.~\cite{deI:abel-transforms-x-ray-tomography, HE:integral-geometry-radon-transforms}). An example of a non-Riemannian geometry where our main theorem applies is a Finsler norm arising from an anisotropic sound speed which is reversible and spherically symmetric and satisfies the Herglotz condition (see section \ref{sec:herglotz}).\NTR{Added this sentence.}

We use angular Fourier series to reduce the inverse problem to the invertibility of certain Abel-type integral transforms. This approach was used in~\cite{deI:abel-transforms-x-ray-tomography} where the authors 
proved various injectivity results of generalized Abel transforms\NTR{Rephrased this sentence.}
which we also use in the proof of our main result. By a careful treatment of the Taylor expansions of geodesics near their lowest point to the origin we show that the Abel transforms we encounter are indeed injective.

Our result is related to the travel time tomography or the boundary rigidity problem. Travel time tomography is an imaging method used in seismology where one wants to determine the speed of sound inside the Earth by measuring travel times of seismic waves on the surface of the Earth~\cite{SUVZ-travel-time-tomography}. The ray paths correspond to geodesics and travel times to lengths of geodesics. The boundary rigidity problem is a more general geometric inverse problem where one wants to determine a Riemannian metric or more generally a Finsler norm from the distances between boundary points~\cite{SUVZ-travel-time-tomography}. 

The travel time tomography problem was already solved in the 1900s for radial sound speeds satisfying the Herglotz condition~\cite{HE-inverse-kinematic-problem, WZ-kinematic-problem}. However, it is observed that the Earth exhibits more complicated and especially anisotropic behaviour with respect to the sound speed~\cite{CRE-anisotropy-inner-core, DA-PREM-model, SHE-introduction-to-seismology}. In the anisotropic case seismic rays propagate along geodesics of a Finsler norm~\cite{ABS-seismic-rays-as-finsler-geodesics, YN-finsler-seismic-ray-path} and Riemannian geometry is not enough to describe the most general types of anisotropies. The boundary rigidity problem is already a difficult non-linear inverse problem in the Riemannian case and anisotropies complicate things even more. 

It is known that Finsler norms arising in elasticity are reversible~\cite{deILS-finsler-boundary-distance-map} (see also section~\ref{subsec:elastictraveltime}) which puts some constraints on the geometry. Invariance under rotations is a natural physical requirement for the Finsler norm (or sound speed) since the Earth is (roughly) spherically symmetric. Our Herglotz condition~\eqref{eq:herglotz} is a natural generalization of the usual Herglotz condition to anisotropic sound speeds (see equation~\eqref{eq:herglotzanisotropicspeed}) and it implies that certain geodesics behave nicely (see section~\ref{sec:herglotz}). We can further simplify the problem by linearizing it. If the variations of the Riemannian metric or Finsler norm are conformal, then linearization of the boundary rigidity problem leads to the geodesic ray transform of scalar functions on the base manifold (see~\cite{SHA-integral-geometry-tensor-fields} and section~\ref{sec:conformal}). This especially holds for a family of conformal Finsler norms induced by a conformal family of stiffness tensors. 

Our main theorem implies boundary rigidity up to first order for a conformal family of spherically symmetric reversible Finsler norms satisfying the Herglotz condition. In terms of elasticity, if we have a conformal family of stiffness tensors (a family of factorized anisotropic inhomogeneous media~\cite{CE-seismic-ray-theory, YN-finsler-seismic-ray-path}) such that the induced family of Finsler norms give the same distances between boundary points and satisfy the assumptions of theorem~\ref{thm:maintheorem}, then the stiffness tensors are equal up to first order (see section~\ref{subsec:elastictraveltime}).

\subsection{The main theorem}
\label{sec:maintheorem}
Let us first quickly introduce the key definitions and notation. More details can be found in section~\ref{sec:preliminaries}.

Let~$M$ be a smooth manifold. A Finsler norm $F\colon TM\to [0, \infty)$ is a non-negative function on the tangent bundle~$TM$ so that the map $y\mapsto F(x, y)$ is a positively homogeneous (but not necessarily homogeneous) norm in the tangent space $T_xM$ for each $x\in M$. Finsler norm~$F$ is reversible if $F(x, -y)=F(x, y)$ for all $x\in M$ and $y\in T_x M$. A reversible Finsler norm defines a homogeneous norm in~$T_x M$. The length of a curve $\gamma\colon [a, b]\to M$ is defined as $L(\gamma)=\int_a^b F(\gamma(t), \dot{\gamma}(t))\der t$. The geodesics of a Finsler norm are critical points of the length functional $\gamma\mapsto L(\gamma)$, or equivalently they satisfy the geodesic equation (see equation~\eqref{eq:geodesicequation}).

Our manifold will eventually be an annulus $M\subset\R^2$ with outer boundary centered at the origin. The inner boundary of the annulus is not included in~$M$ so~$\partial M$ only consists of the outer boundary. We say that a Finsler norm~$F$ on~$M$ is spherically symmetric, if $U^*F=F$ for all $U\in SO(2)$. Let $(x^1, x^2)=(r, \theta)$ be the polar coordinates on~$M$. These coordinates induce a coordinate basis $\{\partial_r, \partial_\theta\}$ in every tangent space~$T_{(r, \theta)}M$. If $y=y^1\partial_r+y^2\partial_\theta\in T_{(r, \theta)}M$, then we denote its coordinates by $(y^1, y^2)=(\rho, \phi)$; see figure~\ref{fig:coordinates}. We can then equivalently say that the Finsler norm~$F$ on~$M$ is spherically symmetric if it is independent of the angular variable $\theta$, i.e. $F=F(r, \rho, \phi)$.\NTR{Added this sentence.}

We say that a spherically symmetric reversible Finsler norm~$F=F(r, \rho, \phi)$ on~$M$ satisfies the Herglotz condition, if 
\begin{equation}
\label{eq:herglotz}
\partial_r F^2(r, 0, \phi)>0
\end{equation}
for all $r\in (R, 1]$ and $\phi\neq 0$.
Here $R\in (0, 1)$ is the inner radius of the annulus.
The Herglotz condition implies that $(M, F)$ admits a strictly convex foliation and geodesics, which are initially tangential to circles, reach the outer boundary in finite time (see lemmas~\ref{lemma:foliation} and~\ref{lemma:non-trapping}). More generally, we say that a spherically symmetric reversible Finsler norm~$F$ on an $n$-dimensional annulus $M\subset\R^n$ satisfies the Herglotz condition if~$F$ satisfies the two-dimensional Herglotz condition~\eqref{eq:herglotz} on all slices $M\cap P$ where $P\subset\R^n$ is a two-dimensional subspace. 

\begin{figure}[htp]
\centering
\includegraphics[height=8cm]{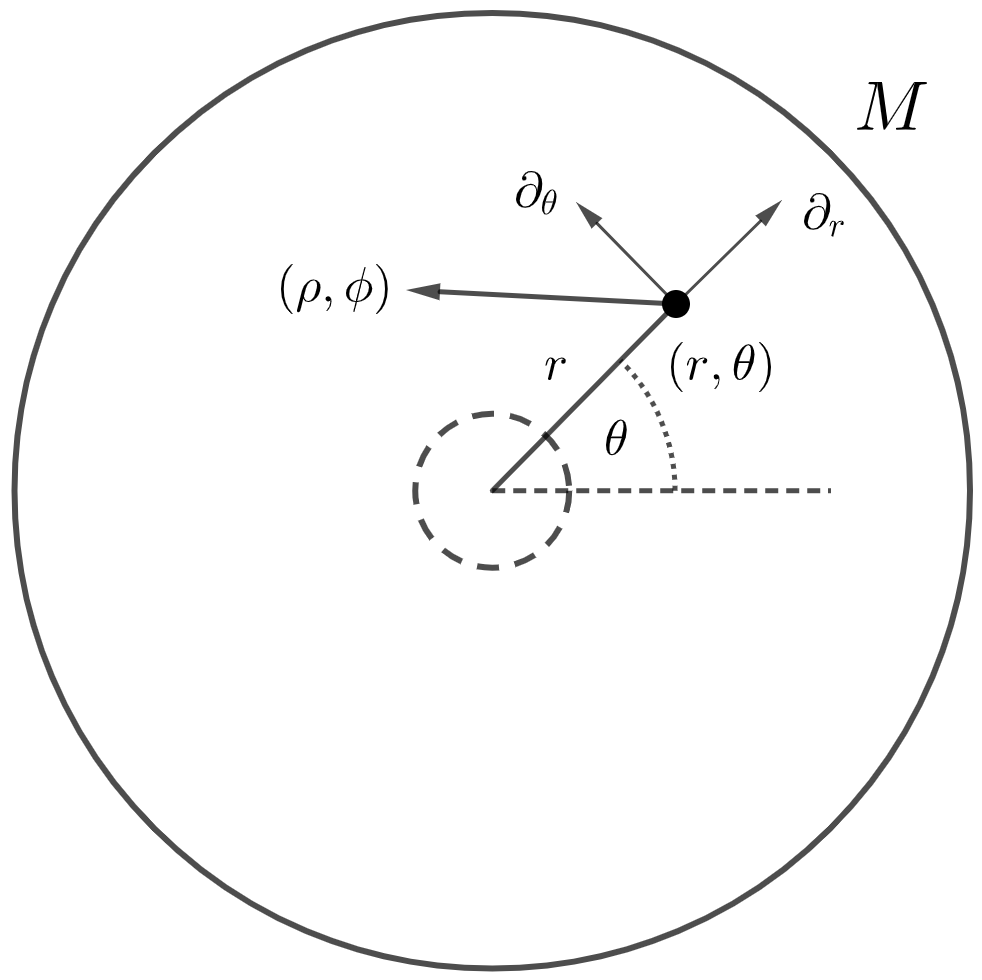}
\caption{Our manifold~$M$ and the coordinate system $(r, \theta, \rho, \phi)$ on~$TM$. The coordinate vector fields~$\partial_r$ and~$\partial_\theta$ form a basis in each tangent space~$T_{(r, \theta)}M$ and the coordinates of $y\in T_{(r, \theta)}M$ with respect to this basis are $(\rho, \phi)$. The components are given by $\rho=\der r(y)$ and $\phi=\der\theta(y)$. We use polar coordinates only on~$M$; the induced coordinates on~$T_{(r, \theta)}M$ are Euclidean. The inner boundary (dashed) is not included in~$M$.}
\label{fig:coordinates}
\end{figure}

The geodesic ray transform~$\geod$ takes a sufficiently regular scalar field~$f$ on~$M$ and integrates it over geodesics, i.e. $\geod f(\gamma)=\int_\gamma f\der s$ where~$\gamma$ is a geodesic of the Finsler norm~$F$. 
The Herglotz condition guarantees that geodesics which are initially tangential to circles 
have a unique closest point to the origin and the integrals exist for such geodesics.\NTR{Changed "all geodesics" to "geodesics which are initially tangential to circles" since we do not know if the manifold $(M, F)$ is non-trapping.}

Our main theorem is the following injectivity result. The proof of the theorem can be found in section~\ref{sec:proofofmainresult}.

\begin{theorem}
\label{thm:maintheorem}
Let $n\geq 2$, $M=\bar{B}(0, 1)\setminus\bar{B}(0, R)\subset\R^n$ where $R\in (0, 1)$ and equip~$M$ with a smooth spherically symmetric reversible Finsler norm~$F$ which satisfies the Herglotz condition. Then the geodesic ray transform~$\geod$ is injective on~$L^2(M)$.
\end{theorem}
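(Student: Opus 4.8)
The plan is to reduce the $n$-dimensional problem to a two-dimensional one and then to a family of one-dimensional Abel-type integral equations. First I would note that, as with classical X-ray/geodesic tomography on spherically symmetric manifolds, it suffices to treat $n=2$: any geodesic of $F$ initially tangent to a circle stays in the two-plane $P$ spanned by its initial position and velocity (this is a consequence of spherical symmetry, since the reflection fixing $P$ is an isometry), and $F$ restricted to $M\cap P$ is a spherically symmetric reversible Finsler norm on a 2-annulus satisfying the Herglotz condition by hypothesis. Integrating $f\in L^2(M)$ over all such geodesics in all two-planes $P$ through the origin, and using that the restriction $f|_{M\cap P}$ lies in $L^2(M\cap P)$ for a.e. $P$ (Fubini in polar-type coordinates on $\R^n$), the vanishing of $\geod f$ forces $f|_{M\cap P}=0$ for a.e. $P$, hence $f=0$. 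So the heart of the matter is the case $n=2$.

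For $n=2$, I would expand $f$ in an angular Fourier series $f(r,\theta)=\sum_{k\in\Z} f_k(r)e^{ik\theta}$ with $f_k\in L^2((R,1],r\,\der r)$. By spherical symmetry the family of geodesics tangent to circles is parametrized by (roughly) the radius $r_0$ of the turning point together with the angular position of that turning point; a geodesic with turning radius $r_0$ is symmetric about its lowest point. Writing $\geod f(\gamma)=0$ for the geodesic with turning radius $r_0$ and integrating against $e^{-ik\theta_0}$ over the angular parameter decouples the equation mode by mode: for each $k$ one obtains $\mathcal{A}_k f_k = 0$, where $\mathcal{A}_k$ is a generalized Abel transform whose kernel is built from the Finsler geometry — essentially $\mathcal{A}_k f_k(r_0)=\int_{r_0}^1 w_k(r_0,r)\,f_k(r)\,\der r$ with a weight $w_k$ determined by how the angle $\theta$ and arclength $s$ evolve along the geodesic as functions of $r$. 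The key structural point, exactly as in~\cite{deI:abel-transforms-x-ray-tomography}, is that this weight has an Abel-type singularity $\sim (r^2-r_0^2)^{-1/2}$ (or the appropriate Finslerian analogue) as $r\downarrow r_0$, plus a smooth remainder, so that $\mathcal{A}_k$ falls into the class of generalized Abel transforms proved injective there.

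To verify that $\mathcal{A}_k$ really is of that admissible form, I would carry out a careful Taylor expansion of the geodesic near its turning point $r_0$, in the spirit of the excerpt's remark about "Taylor expansions of geodesics near their lowest point". Using the Finsler geodesic equation~\eqref{eq:geodesicequation} in polar coordinates and the conserved quantity coming from the angular Killing symmetry (the Finslerian Clairaut relation), one expands $r(s)$, $\theta(s)$ about $s=0$; the Herglotz condition~\eqref{eq:herglotz}, i.e. $\partial_r F^2(r,0,\phi)>0$, is precisely what guarantees that $r$ has a nondegenerate minimum at the turning point (so $\ddot r(0)>0$ in a suitable sense), which in turn produces the square-root behavior of $\der\theta/\der r$ and $\der s/\der r$ near $r_0$ and the strictly convex foliation of lemma~\ref{lemma:foliation}; lemma~\ref{lemma:non-trapping} ensures the geodesic actually reaches $\partial M$ so the transform is well defined. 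I expect the main obstacle to be exactly this step: extracting the precise leading singularity and smoothness of the Abel kernel from the Finsler geodesic flow, where $F$ is only a Minkowski norm fiberwise (no inner-product structure, so one cannot use Riemannian normal forms) — one must work directly with $F$ and its $\phi$-derivatives and check uniformity of the expansion in $k$ and $r_0$. Once the kernel is identified, injectivity of each $\mathcal{A}_k$ from~\cite{deI:abel-transforms-x-ray-tomography} gives $f_k=0$ for all $k$, hence $f=0$, completing the proof.
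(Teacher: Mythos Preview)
Your proposal is correct and follows essentially the same strategy as the paper: reduce to $n=2$ via totally geodesic two-planes and the $L^2$ restriction lemma, expand in angular Fourier modes, rewrite the transform on each mode as a generalized Abel operator with an $(r-r_0)^{-1/2}$-type singularity, and then invoke the Abel-transform injectivity results of~\cite{deI:abel-transforms-x-ray-tomography} after verifying the kernel is bounded, nonvanishing on the diagonal, and Lipschitz near it via Taylor expansion of the geodesic at its turning point (where the Herglotz condition gives $\ddot r(0)>0$). The paper organizes the technical step exactly as you anticipate, proving the needed regularity of $K(r_0,r)=(r-r_0)^{1/2}\dot r^{-1}$ and of $\omega^2(r_0,r)$ separately; the uniformity in $k$ you flag is obtained not by tracking $k$ through the expansion but by noting $\cos(k\omega)=h(\omega^2)$ for an analytic $h$, so Lipschitz continuity of $\omega^2$ suffices.
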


\begin{remark}
\label{remark:reduction}
It is enough to prove theorem~\ref{thm:maintheorem} in two dimensions. Namely, if we intersect a higher-dimensional annulus~$M\subset\R^n$ with any two-dimensional linear subspace $P\subset\R^n$, we get a totally geodesic submanifold~$M\cap P\subset M$ since~$F$ is reversible and spherically symmetric. Also, by~\cite[Lemma 17]{deI:abel-transforms-x-ray-tomography} it holds that if $f\in L^2(M)$, then $f|_{M\cap P}\in L^2(M\cap P)$ for almost every two-dimensional plane~$P$. Hence if theorem~\ref{thm:maintheorem} is true for $n=2$, then it is also true for all $n\geq 2$.
\end{remark}

\begin{remark}
\label{remark:regularity}
We assume that the Finsler norm~$F$ in theorem~\ref{thm:maintheorem} is smooth. This regularity assumption could be weakened: from the proof of theorem~\ref{thm:maintheorem} one sees that a finite number of derivatives with respect to the variables~$x\in M$ and~$y\in T_xM$ is enough. However, we are not going to quantify or optimize the needed regularity assumptions in this paper.
\end{remark}

In theorem~\ref{thm:maintheorem} we assume that the Finsler norm is reversible since it simplifies the proof and Finsler norms arising in elasticity are reversible. Our main application of theorem~\ref{thm:maintheorem} is the seismic imaging of the Earth and therefore we let~$M$ to be an annulus and~$F$ to be spherically symmetric.\NTR{Moved this sentence here from the end of this paragraph.} We could formulate theorem~\ref{thm:maintheorem} in terms of a general family of curves satisfying certain properties (see remark~\ref{remark:familyofcurves}).
In fact, in the proof of theorem~\ref{thm:maintheorem} we only use integrals of~$f$ over geodesics which have a unique lowest point to the origin. Due to the Herglotz condition geodesics can not have more than one point where the radial speed~$\dot{r}$ vanishes. However, we do not know whether our manifold is non-trapping, i.e. we do not know if all geodesics reach the boundary in finite time or if there exists trapped geodesics.\NTR{Rewrote the discussion on the geodesics we use in our main theorem.}

Theorem~\ref{thm:maintheorem} is proved in the following way (see sections~\ref{sec:preliminaries}, \ref{sec:proofofmainresult} and~\ref{sec:regularityofkernel} for more details). Since our manifold is annulus we can express any $L^2$-function~$f$ as an angular Fourier series. Using this and the reversibility of~$F$ the geodesic ray transform of~$f$ can be written as a sum of generalized Abel transforms acting on the Fourier components of~$f$. By the Taylor expansions of geodesics and careful treatment of the error terms we show that these Abel transforms are injective. From this it follows that the Fourier components of~$f$ all vanish, giving the claim.

Theorem~\ref{thm:maintheorem} can be seen as a generalization of the corresponding Riemannian result in~\cite{deI:abel-transforms-x-ray-tomography} (see also~\cite{RO-herglotz-linearized}) and the proof is similar in spirit. In fact, we use the theory of Abel transforms introduced in~\cite{deI:abel-transforms-x-ray-tomography} to prove our result. However, many formulas which were explicit in~\cite{deI:abel-transforms-x-ray-tomography} become implicit and less tractable in our Finslerian case. For this reason we use the Taylor expansions of geodesics near their lowest point to show the needed regularity properties of the integral kernels of the Abel transforms (see section~\ref{sec:regularityofkernel}). 

\begin{remark}
\label{remark:familyofcurves}
We could express theorem~\ref{thm:maintheorem} in terms of a more general family of curves than geodesics. From the proof of our main theorem one sees that the curves only need to be ``sufficiently smooth" with respect to the Taylor expansions and ``sufficiently symmetric" with respect to the Finsler norm. The family of curves can be characterized by the following properties (compare to the assumptions in ~\cite{AD-finsler-scalar-plus-oneform}):
\medskip
\begin{enumerate}[label=(A\arabic*)]
\item All the curves in the family are smooth with unit speed. \label{item:smoothcurves}
\medskip
\item For every $x\in M$ and $y\in T_xM$ there is unique curve going through~$x$ to the direction~$y$. \label{item:uniquecurve}
\medskip
\item The curves depend smoothly on the initial conditions~$x$ and~$y$.
\medskip
\item Every curve reaches the boundary in finite time and has unique closest point to the origin where $\dot{r}_0=0$ and $\ddot{r}_0>0$.
\medskip
\item The curves are symmetric with respect to the lowest point and they consist of two parts where $\dot{r}>0$ and $\dot{r}<0$.
\medskip
\item The curves satisfy the weak reversibility condition~\eqref{eq:weakreversibility}. \label{item:weakreversibility}
\end{enumerate}
\medskip
The assumptions~\ref{item:smoothcurves}--\ref{item:weakreversibility} allow the existence of conjugate points on~$M$: if~$F$ is for example induced by the Riemannian metric $g=c^{-2}(r)e$ where $c=c(r)$ is smooth and satisfies the Herglotz condition and~$e$ is the Euclidean metric, then the (non-radial) geodesics of~$g$ satisfy conditions \ref{item:smoothcurves}--\ref{item:weakreversibility} (see e.g.~\cite{MO-herglotz-conjugate-points, MO-inverse-kinematic, PSU-geometric-inverse-book}). We also note that the regularity assumptions for the admissible curves could be weakened (finite number of derivatives is enough, see remark~\ref{remark:regularity}).
\end{remark}

\begin{remark}
Theorem~\ref{thm:maintheorem} can also be seen as a generalization of the famous Helgason support theorem in Euclidean geometry~\cite{HE:integral-geometry-radon-transforms} (see also~\cite[Remark 31]{deI:abel-transforms-x-ray-tomography}). According to Helgason's theorem, if a function integrates to zero on all lines not intersecting a given convex and compact set, then the function has to vanish outside that set. Since the Herglotz condition allows the presence of conjugate points (see~\cite{MO-herglotz-conjugate-points}) and on Riemannian manifolds the existence of conjugate points implies instability for the geodesic ray transform~\cite{MSU-conjugate-points-instability}, we do not expect stability for our injectivity or uniqueness result.
\end{remark}

\begin{remark}
By combining our approach with the ideas and methods of the proof of theorem~29 in~\cite{deI:abel-transforms-x-ray-tomography} we could also prove (with minor changes in the proof of theorem~\ref{thm:maintheorem}) that the attenuated geodesic ray transform is injective on (sufficiently smooth) scalar fields on our manifold~$(M, F)$ when the attenuation is a sufficiently regular radial function.
See \cite{HMS:attenuated,PSU:attenuated} for results on attenuated transforms on manifolds.
\NTR{Rephrased the statement about attenuated geodesic ray transform. This is a new a numbered remark. Added two references.}
\end{remark}

\subsection{Related results}
The geodesic ray transform has been widely studied but most of the results are obtained in the Riemannian setting. If $(M, g)$ is a compact simple Riemannian manifold with boundary (and smooth metric), then the geodesic ray transform is known to be injective~\cite{MU-reconstruction-problem-two-dimensional}.
Recently it was proved in~\cite{IK-low-regularity} that injectivity holds also when the simple Riemannian metric is only $C^{1, 1}$-regular.
Injectivity is known in the presence of conjugate points as well: if the Riemannian metric is of the form $g=c^{-2}(r)e$ where~$e$ is the Euclidean metric and the radial sound speed $c=c(r)$ satisfies the Herglotz condition, then~$\geod$ is injective on scalar fields~\cite{deI:abel-transforms-x-ray-tomography, MO-herglotz-conjugate-points, RO-herglotz-linearized, UV-local-geodesic-ray-transform} (see also~\cite{PSU-geometric-inverse-book} and the generalization to tensor fields in~\cite{SHA-herglotz-tensor}). In this case the Herglotz condition is equivalent to that the manifold has a foliation with strictly convex hypersurfaces (see also lemma~\ref{lemma:foliation}). Our main theorem is also related to the Helgason support theorem in Euclidean space~\cite{HE:integral-geometry-radon-transforms} (see~\cite[Remark 31]{deI:abel-transforms-x-ray-tomography}).

When the geodesic ray transform operates on tensor fields, the uniqueness results are known as solenoidal injectivity since one can only uniquely determine the solenoidal part of the tensor field~\cite{PSU-tensor-tomography-progress, SHA-integral-geometry-tensor-fields}. Solenoidal injectivity is known for example on two-dimensional compact simple manifolds~\cite{PSU-tensor-tomography-on-simple-surfaces}, on simply connected compact manifolds with strictly convex boundary and non-positive curvature~\cite{PS-sharp-stability-nonpositive-curvature, PS-integral-geometry-negative-curvature,SHA-integral-geometry-tensor-fields}, on certain non-compact Cartan--Hadamard manifolds~\cite{LRS-tensor-tomography-cartan-hadamard}\NTR{Removed unpublished reference.} and on manifolds which admit strictly convex foliation~\cite{dHUZ-inverting-higher-rank-tensors, SHA-herglotz-tensor, SUV-inverting-local-tensors, UV-local-geodesic-ray-transform}.
A more comprehensive treatment of the geodesic ray transform on Riemannian manifolds can be found in~\cite{IM:integral-geometry-review, PSU-tensor-tomography-progress, SHA-integral-geometry-tensor-fields}.

There are some injectivity results in the Finslerian case. It is known that the geodesic ray transform is injective on scalar fields on simple Finsler manifolds~\cite{IVA-finsler-monotonicity, SHA-finsler-injectivity}.
The geodesic ray transform is also injective on a certain family of curves on general Finsler surfaces~\cite{AD-finsler-scalar-plus-oneform}. This result extends to one-forms as well when uniqueness is understood modulo potential fields. Compared to the results in~\cite{AD-finsler-scalar-plus-oneform, IVA-finsler-monotonicity, SHA-finsler-injectivity} our theorem allows the existence of conjugate points (see e.g.~\cite{MO-herglotz-conjugate-points}). 
We also note that we could express our main theorem in terms of a family of general geodesic-like curves satisfying certain assumptions (see remark~\ref{remark:familyofcurves} and compare to the assumptions in~\cite{AD-finsler-scalar-plus-oneform}). Other injectivity results for a general family of curves can be found in~\cite{BSU-generic-family-curves, HS-weighted-doppler, MU-inverse-kinematic, SHA-ray-transform-riemannian-manifold}.

The geodesic ray transform arises naturally in the linearization of the travel time tomography or the boundary rigidity problem where one wants to uniquely determine (up to a gauge) the Riemannian metric (more generally a Finsler norm) from the distances between boundary points~\cite{SHA-integral-geometry-tensor-fields}. When we have conformal variations then the linearized problem reduces to the injectivity of the geodesic ray transform in the background geometry (see section~\ref{sec:conformal}). The travel time tomography problem was solved over a century ago for radial sound speeds satisfying the Herglotz condition~\cite{HE-inverse-kinematic-problem, WZ-kinematic-problem} (see also~\cite{PSU-geometric-inverse-book}). 
In this case the solution of the problem reduces to the inversion of an Abel transform~\cite{NOW-herglotz-abel-transform, SHE-introduction-to-seismology}.
There are also recent spectral rigidity results for radial sound speeds which satisfy the Herglotz condition~\cite{deIK-spectral-rigidity-herglotz}. 

In the more general setting boundary rigidity is known for two-dimensional compact simple Riemannian surfaces~\cite{PU-simple-manifolds-boundary-rigidity}, for manifolds admitting strictly convex foliation~\cite{SUV-boundary-rigidity-foliation, SUVZ-travel-time-tomography} and for compact simple Riemannian manifolds which are in the same conformal class~\cite{CRO-rigidity-conformal, MU-reconstruction-problem-two-dimensional, SUVZ-travel-time-tomography}. There are some Finslerian results as well including Randers metrics~\cite{MO-randers}, reversible Finsler norms which satisfy a strictly convex foliation~\cite{deILS-broken-scattering-rigidity} and projectively flat Finsler norms in the plane~\cite{ALE-convex-rigidity-plane, AM-pseudo-metrics-on-the-plane, KO-boundary-rigidity-projective-metrics}. Our main result can be seen as a boundary rigidity result up to first order for a conformal family of spherically symmetric reversible Finsler norms satisfying the Herglotz condition (see section~\ref{sec:conformal}).
A survey of the boundary rigidity or the travel time tomography problem can be found in~\cite{SUVZ-travel-time-tomography}.

\subsection{Organization of the paper}
\NTR{Removed a sentence.} In section~\ref{sec:preliminaries} we go through basic definitions and properties of Finsler manifolds and Abel transforms and we study the Herglotz condition. We prove our main theorem in section~\ref{sec:proofofmainresult}. In section~\ref{sec:regularityofkernel} we prove the regularity properties of the integral kernel of the Abel transforms. Finally, in section~\ref{sec:linearizedelastic} we discuss the linearization of the boundary rigidity problem on Finsler manifolds and the application of our result to linearized elastic travel time tomography.

\subsection*{Acknowledgements}
J.I. was supported by Academy of Finland (grants 332890, 336254, 351665, 351656).
K.M. was supported by Academy of Finland (Centre of Excellence in Inverse Modelling and Imaging, grant numbers 284715 and 309963).
We wish to thank the anonymous referee for feedback.\NTR{Thank you for your help!}

\section{Preliminaries}
\label{sec:preliminaries}
In this section we go through definitions, notation and lemmas which are needed in the proof of our main theorem. The basic theory of Finsler geometry can be found in~\cite{AL-global-aspects-finsler-geometry, BCS-introduction-finsler-geometry,  CS-riemann-finsler-geometry, SHE-lectures-on-finsler-geometry} and the geodesic ray transform is treated in detail in~\cite{SHA-integral-geometry-tensor-fields}. Generalized Abel transforms are studied for example in~\cite{deI:abel-transforms-x-ray-tomography, IL-broken-ray-abel}. 

\subsection{Finsler manifolds}
\label{sec:finslermanifolds}
Let~$M$ be a smooth manifold with or without a boundary. We use $x\in M$ to denote the base point and $y\in T_xM$ to denote the direction in the tangent space. A non-negative function $F\colon TM\to [0, \infty)$ of the tangent bundle is called a Finsler norm if it satisfies the following conditions:
\medskip
\begin{enumerate}[label=(\roman*)]
    \item\label{item:smoothness} $F$ is smooth in $TM\setminus\{0\}$
    \medskip
    \item $F(x, y)=0$ if and only if $y=0$ 
    \medskip
    \item $F(x, \lambda y)=\lambda F(x, y)$ for every $\lambda\geq 0$ 
    \medskip
    \item\label{item:convexity} $\frac{1}{2}\frac{\partial^2 F^2(x, y)}{\partial y^i\partial y^j}$ is positive definite for all $y\neq 0$. 
\end{enumerate}
\medskip
The pair $(M, F)$ is called a Finsler manifold.
In other words, the map $y\mapsto F(x, y)$ defines a Minkowski norm in~$T_xM$ for every $x\in M$.
The length of a piecewise smooth curve $\gamma\colon [a, b]\to M$ is defined as $L(\gamma)=\int_a^b F(\gamma(t),\dot{\gamma}(t))\der t$.
In this way a Finsler norm~$F$ defines a (not necessarily symmetric) distance function on~$M$.

Finsler norm~$F$ is reversible, if $F(x, -y)=F(x, y)$ for all $x\in M$ and $y\in T_xM$. Riemannian metrics are a special case of reversible Finsler norms: if $g$ is a Riemannian metric, then $F_g(x, y)=\sqrt{g_{ij}(x)y^iy^j}$ defines a reversible Finsler norm where we have used the Einstein summation convention under the square root. A distance function induced by a reversible Finsler norm is symmetric. Not all Finsler norms are reversible: examples include Randers metrics $F=F_g+\beta$ where~$g$ is a Riemannian metric and~$\beta$ is a one-form. On the other hand, there are reversible Finsler norms which are not induced by any Riemannian metric.

Using convexity property~\ref{item:convexity} we can define the Finslerian metric tensor
\begin{equation}
g_{ij}(x, y)=\frac{1}{2}\frac{\partial^2 F^2(x, y)}{\partial y^i\partial y^j}.
\end{equation}
If $F=F_g$ is induced by a Riemannian metric, then $g_{ij} (x, y)=g_{ij}(x)$ is independent of $y\in T_xM$. Using the Finslerian metric tensor one can define the Legendre transformation $L\colon TM\to T^*M$ which in the Riemannian case corresponds to the musical isomorphisms. Legendre transformation allows us to define the co-Finsler norm (or dual norm) $F^*\colon T^*M\to[0, \infty)$ so that for every $\omega\in T^*_xM$ we have
\begin{equation}
F^*(x, \omega)=\sup_{\substack{y\in T_xM \\F(x, y)=1}}\omega(y).
\end{equation}

Let $\gamma\colon [a, b]\to M$ be a smooth curve on~$M$. We call~$\gamma$ a geodesic if it is a critical point of the length functional $\gamma\mapsto L(\gamma)$. Equivalently, we say that~$\gamma$ is geodesic if it satisfies the geodesic equation
\begin{equation}
\label{eq:geodesicequation}
\ddot{\gamma}^i(t)+2G^i(\gamma(t), \dot{\gamma}(t))=0
\end{equation}
where $G^i=G^i(x, y)$ are the spray coefficients defined as
\begin{equation}
G^i(x, y)=\frac{1}{4}g^{il}(x, y)\bigg(y^k\frac{\partial^2 F^2(x, y)}{\partial x^k\partial y^l}-\frac{\partial F^2(x, y)}{\partial x^l}\bigg).
\end{equation}
Here $g^{ij}(x, y)$ is the inverse matrix of $g_{ij}(x, y)$ and we have used the Einstein summation convention. Geodesics correspond to straightest possible paths on a Finsler manifold and they minimize distances locally. It follows that if~$F$ is a reversible Finsler norm and~$\gamma$ is a geodesic of~$F$, then the reversed reparametrization~$\overleftarrow{\gamma}(t)=\gamma(-t)$ is also a geodesic of~$F$.

\subsection{The Herglotz condition}
\label{sec:herglotz}
Let~$M\subset\R^2$ be an annulus centered at the origin and let $(x^1, x^2)=(r, \theta)$ be the usual polar coordinates on~$M$. The coordinate vector fields~$\partial_r$ and~$\partial_\theta$ form a basis in every tangent space~$T_{(r, \theta)}M$. The coordinates of a tangent vector $y\in T_{(r, \theta)}M$ in this basis are denoted by~$(y^1, y^2)=(\rho, \phi)$ (see figure~\ref{fig:coordinates}). The components of~$y$ can be calculated using the differentials $\rho=\der r(y)$ and $\phi=\der\theta(y)$. Hence we can identify $\rho\leftrightarrow\der r$ and $\phi\leftrightarrow\der\theta$. 
 
We say that~$F$ is a spherically symmetric Finsler norm on~$M$ if $U^*F=F$ for every $U\in SO(2)$ where the pullback of a Finsler norm via smooth map~$\Phi$ is defined as $(\Phi^* F)(x, y)=F(\Phi (x), \der\Phi_x (y))$. Spherical symmetry implies that $F=F(r, \rho, \phi)$ is independent of~$\theta$ and the angular momentum $L(r, \rho, \phi)=\frac{1}{2}\partial_{\phi}F^2(r, \rho, \phi)$ is conserved along geodesics.
Further, we say that a spherically symmetric reversible Finsler norm~$F$ on~$M$ satisfies the Herglotz condition, if
\begin{equation}
\label{eq:herglotzcond}
\partial_r F^2(r, 0, \phi)>0
\end{equation}
for all $r\in (R, 1]$ and $\phi\neq 0$ where $R\in (0, 1)$. 

If $\gamma$ is a geodesic on $(M, F)$, we write it in polar coordinates as $\gamma(t)=(r(t), \theta(t))$.\NTR{Added the expression of $\gamma$ in polar coordinates so that the next three formulas can be understood.}
The geodesic equation for the radial coordinate becomes
\begin{equation}
\label{eq:geodesicradial}
\ddot{r}(t)=-\frac{g^{1l}(\gamma(t), \dot{\gamma}(t))}{2}\big(\dot{\gamma}^k(t)\partial_{x^k}\partial_{y^l}F^2(\gamma(t), \dot{\gamma}(t))-\partial_{x^l}F^2(\gamma(t), \dot{\gamma}(t))\big).
\end{equation}
If $\dot{r}(t_0)=0$, then by spherical symmetry $\partial_\theta F^2(x, y)=0$ and
\begin{equation}
\ddot{r}(t_0)=\frac{1}{2}g^{11}(\gamma(t_0), \dot{\gamma}(t_0))\partial_{r}F^2(\gamma(t_0), \dot{\gamma}(t_0)).
\end{equation}
Since the Finslerian metric tensor $g_{ij}(x, y)$ is positive definite one sees that the Herglotz condition~\eqref{eq:herglotzcond} is equivalent to that
\begin{equation}
\label{eq:herglotzradialderivative}
\text{if $\dot{r}(t_0)=0$, then $\ddot{r}(t_0)>0$}.
\end{equation}
Equation~\eqref{eq:herglotzradialderivative} means that geodesics which are initially tangential to circles curve outwards. 
One can also see from the geodesic equations that the Herglotz condition forbids circles being geodesics.

An example of a spherically symmetric reversible Finsler norm satisfying~\eqref{eq:herglotzcond} is the Finsler norm (see also~\cite{YN-finsler-seismic-ray-path})
\begin{equation}
\label{eq:finslernormexample}
F^2(r, \rho, \phi)=\frac{\rho^2+r^2\phi^2}{c^2(r, \rho, \phi)}
\end{equation}
where the (anisotropic) sound speed $c=c(r, \rho, \phi)$ is reversible $c(r, -\rho, -\phi)=c(r, \rho, \phi)$ and satisfies the Herglotz condition
\begin{equation}
\label{eq:herglotzanisotropicspeed}
\frac{\partial}{\partial r}\bigg(\frac{r}{c(r, 0, \phi)}\bigg)>0
\end{equation}
for all $r\in (R, 1]$ and $\phi\neq 0$. 
Note that if $c=c(r)$ is radial (i.e.~$F$ is Riemannian), then this reduces to the usual Herglotz condition 
$$
\frac{\der}{\der r}\bigg(\frac{r}{c(r)}\bigg)>0
$$
for all $r\in (R, 1]$ and the Finsler norm defined by equation~\eqref{eq:finslernormexample} is a natural generalization of the Riemannian metric
$$
g=\frac{\der r^2+r^2\der\theta^2}{c^2(r)}.
$$

We say that a Finsler manifold $(M, F)$ with boundary has a strictly convex foliation, if there is a smooth function $\psi\colon M\to\R$ such that
\medskip
\begin{enumerate}[label=(\alph*)]
    \item $\psi^{-1}\{0\}=\partial M$, $\psi^{-1}(0, S]=\text{int}(M)$ and $\psi^{-1}(S)$ has empty interior.\label{item:foliation1}
    \medskip
    \item For each $s\in [0, S)$ the set $\Sigma_s=\psi^{-1}(s)$ is a strictly convex smooth surface in the sense that $\der \psi\neq 0$ and any geodesic~$\gamma$ having initial conditions in $T\Sigma_s$ satisfies $\frac{\der^2}{\der t^2}\psi(\gamma(t))|_{t=0}<0$.\label{item:foliation2}
\end{enumerate}
\medskip
For more details and discussion see~\cite{deILS-broken-scattering-rigidity}.
The Herglotz condition~\eqref{eq:herglotzcond} implies that~$M$ has strictly convex foliation, i.e. circles $\aabs{x}=r$ are strictly convex.

\begin{lemma}
\label{lemma:foliation}
Let $M=\bar{B}(0, 1)\setminus\bar{B}(0, R)\subset\R^2$ where $R\in (0, 1)$ and equip~$M$ with a spherically symmetric reversible Finsler norm~$F$ which satisfies the Herglotz condition~\eqref{eq:herglotzcond}. Then $(M, F)$ admits a strictly convex foliation. 
\end{lemma}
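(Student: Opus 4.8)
The plan is to write down the foliation explicitly, with leaves the Euclidean circles, and to reduce the convexity requirement to the reformulation \eqref{eq:herglotzradialderivative} of the Herglotz condition already established above. Concretely, I would set $\psi\colon M\to\R$, $\psi(x)=1-\abs{x}=1-r$, and $S=1-R$. Since $R>0$, the origin does not lie in $M$, so $\psi$ is smooth on all of $M$ (including $\partial M$), $\der\psi=-\der r\neq0$ everywhere, and the level sets $\Sigma_s=\psi^{-1}(s)$ are exactly the circles $\{\aabs{x}=1-s\}$, $s\in[0,S)$.

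Checking property~\ref{item:foliation1} is pure bookkeeping with the (half-open) definition of $M$: on $M=\{R<r\le1\}$ one has $\psi\in[0,1-R)$, hence $\psi^{-1}\{0\}=\{r=1\}=\partial M$, $\psi^{-1}(0,S]=\psi^{-1}(0,1-R]=\{R<r<1\}=\operatorname{int}(M)$ (the value $S$ being unattained, corresponding to the excluded inner circle $r=R$), and $\psi^{-1}(S)=\emptyset$, which has empty interior. Since $\der\psi\neq0$, the first half of~\ref{item:foliation2} is also immediate; the content is the second-order condition.

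For the strict convexity, fix $s\in[0,S)$, so $\Sigma_s=\{r=r_0\}$ with $r_0=1-s\in(R,1]$, and let $\gamma(t)=(r(t),\theta(t))$ be a geodesic with $\dot\gamma(0)\in T_{\gamma(0)}\Sigma_s$. Membership in $T\Sigma_s$ says $\dot r(0)=\der r(\dot\gamma(0))=0$, and since a geodesic has $\dot\gamma(0)\neq0$ the angular component satisfies $\phi(0)=\dot\theta(0)\neq0$. Now $\psi(\gamma(t))=1-r(t)$, so $\frac{\der^2}{\der t^2}\psi(\gamma(t))\big|_{t=0}=-\ddot r(0)$. Because $\dot r(0)=0$, the radial geodesic equation \eqref{eq:geodesicradial} simplifies (spherical symmetry annihilates both the $\partial_\theta F^2$ term and the $\dot r\,\partial_r\partial_y F^2$ term) to $\ddot r(0)=\frac{1}{2}g^{11}(\gamma(0),\dot\gamma(0))\,\partial_r F^2(r_0,0,\phi(0))$. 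Positive definiteness of the Finslerian metric tensor gives $g^{11}>0$, and since $r_0\in(R,1]$ and $\phi(0)\neq0$ the Herglotz condition \eqref{eq:herglotzcond} gives $\partial_r F^2(r_0,0,\phi(0))>0$ — this is precisely \eqref{eq:herglotzradialderivative}. Hence $\ddot r(0)>0$, i.e. $\frac{\der^2}{\der t^2}\psi(\gamma(t))\big|_{t=0}<0$, which verifies~\ref{item:foliation2}.

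There is essentially no deep obstacle here: the argument is the Finslerian counterpart of the classical Riemannian computation, and the only delicate points are the interval bookkeeping in~\ref{item:foliation1} forced by the excluded inner boundary of $M$, and the remark that a nonzero vector tangent to a circle necessarily has $\phi\neq0$, so that the $\phi\neq0$ clause of the Herglotz condition genuinely applies. If one wanted to avoid even relying on the simplified form of \eqref{eq:geodesicradial}, one could instead quote \eqref{eq:herglotzradialderivative} directly, since it was shown above to be equivalent to \eqref{eq:herglotzcond}.
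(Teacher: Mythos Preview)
Your proof is correct and follows essentially the same approach as the paper: exhibit the foliation by Euclidean circles via an explicit $\psi$ and reduce strict convexity to the radial inequality $\ddot r(0)>0$ coming from the Herglotz condition~\eqref{eq:herglotzradialderivative}. The only cosmetic difference is that the paper takes $\psi(x)=1-\abs{x}^2$ and computes in Cartesian coordinates, whereas your choice $\psi(x)=1-\abs{x}$ gives $\tfrac{\der^2}{\der t^2}\psi(\gamma(t))\big|_{t=0}=-\ddot r(0)$ directly in polar coordinates, which is arguably a touch cleaner.
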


\begin{proof}
Define the function $\psi(x)=1-\aabs{x}^2$. Then~$\psi$ is smooth, $\der\psi(x)=-2x\neq 0$ and it is easy to check the requirements in~\ref{item:foliation1}. The level sets of~$\psi$ are circles. Let~$\gamma$ be a geodesic which is initially tangential to a circle. We need to check that $\frac{\der^2}{\der t^2}\psi(\gamma(t))|_{t=0}<0$.\NTR{Added these three sentences for clarity.} We can calculate $\frac{\der}{\der t}\psi(\gamma(t))=-2\gamma(t)\cdot\dot{\gamma}(t)$ and hence $\frac{\der^2}{\der t^2}\psi(\gamma(t))|_{t=0}=-2(\dot{\gamma}(0)\cdot\dot{\gamma}(0)+\gamma(0)\cdot\ddot{\gamma}(0))$.\NTR{Added computation for the first derivative.} By spherical symmetry we can assume without loss of generality\NTR{Added "without loss of generality".} that $\theta(0)=0$. Since $\dot{r}(0)=0$ we have $\ddot{r}(0)>0$ due to the Herglotz condition~\eqref{eq:herglotzradialderivative}. Therefore $\gamma(0)\cdot\ddot{\gamma}(0)=r(0)\ddot{r}(0)>0$ which implies that $\frac{\der^2}{\der t^2}\psi(\gamma(t))|_{t=0}<0$. This proves that $(M, F)$ has a strictly convex foliation.\NTR{The end of the proof has been written more clearly.}
\end{proof}

\begin{lemma}
\label{lemma:non-trapping}
Let $(M, F)$ be as in lemma~\ref{lemma:foliation}.
If~$\gamma$ is a geodesic such that $\dot{r}(0)=0$, then~$\gamma$ reaches the boundary~$\partial M$ in finite time from both ends and~$\gamma$ consists of two symmetric parts (with respect to $(r(0), \theta(0))$) where $\dot{r}<0$ and $\dot{r}>0$.
\end{lemma}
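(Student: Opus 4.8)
The plan is to analyze a geodesic $\gamma(t)=(r(t),\theta(t))$ with $\dot r(0)=0$ by studying the radial coordinate $r(t)$ as a one-dimensional trajectory, using the Herglotz condition in the form \eqref{eq:herglotzradialderivative} to control its critical points. First I would observe that by spherical symmetry the angular momentum $L(r,\rho,\phi)=\tfrac12\partial_\phi F^2$ is conserved along $\gamma$; since $\gamma$ is tangential to a circle at $t=0$ we have $\rho(0)=\dot r(0)=0$, and the geodesic cannot be radial (the Herglotz condition forbids purely radial behaviour from forcing $L=0$ here — more precisely, $\phi(0)\neq0$, otherwise $\gamma$ would be a radial geodesic and $r$ monotone, contradicting $\dot r(0)=0$ together with $\ddot r(0)>0$ only if one is careful; I would instead simply note $\phi(0)\neq0$ follows from unit speed and $\rho(0)=0$). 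Then \eqref{eq:herglotzradialderivative} gives $\ddot r(0)>0$, so $t=0$ is a strict local minimum of $r$.

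Next I would argue that $t=0$ is the \emph{only} zero of $\dot r$. Suppose $\dot r(t_1)=0$ for some $t_1\neq 0$; then again by \eqref{eq:herglotzradialderivative} we get $\ddot r(t_1)>0$, so every critical point of $r$ is a strict local minimum. But a smooth function on an interval cannot have two strict local minima without a local maximum in between, which would be another critical point that is \emph{not} a strict local minimum — contradiction. Hence $\dot r>0$ for $t>0$ and $\dot r<0$ for $t<0$ as long as $\gamma$ is defined, so $r$ is strictly increasing away from $t=0$ on either side.

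For the finite-time claim, I would use the strictly convex foliation from Lemma~\ref{lemma:foliation}: since $\dot r$ does not vanish for $t\neq0$ and $r$ is strictly increasing in $|t|$, the geodesic cannot stay in the compact annulus $M$ forever without $r$ approaching a limit $r_\infty\le 1$; a standard argument (the geodesic has unit speed, $F$ is smooth and $M$ is compact, so geodesics extend until they hit $\partial M$, and $r(t)$ increasing and bounded with $\dot r$ bounded below on compact pieces away from $0$ forces $r$ to reach $1$ in finite time) shows $\gamma$ meets $\partial M=\{r=1\}$ at finite parameter values $t_\pm$ with $t_-<0<t_+$. Alternatively, one invokes that a strictly convex foliation on a compact manifold is non-trapping for geodesics transversal to the leaves, which is exactly our situation away from $t=0$. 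The main obstacle I anticipate is precisely this finite-time/non-trapping step in the \emph{Finslerian} setting: unlike the explicit Riemannian formulas in \cite{deI:abel-transforms-x-ray-tomography}, here one has only the implicit geodesic equation, so one must extract a uniform lower bound on $|\dot r|$ (equivalently a uniform rate at which $\gamma$ crosses the foliation leaves) from compactness of $M$ and of the relevant part of the sphere bundle, together with the strict convexity inequality in \ref{item:foliation2}.

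Finally, for the symmetry statement I would use reversibility: if $F(x,-y)=F(x,y)$ then $\overleftarrow\gamma(t)=\gamma(-t)$ is also a geodesic, and it has the same initial conditions at $t=0$ as $\gamma$ up to sign of velocity; combined with spherical symmetry (a rotation fixing the point $(r(0),\theta(0))$ composed with the reversal) one shows $r(-t)=r(t)$, so the two branches $\{\dot r<0\}$ and $\{\dot r>0\}$ are mirror images of each other through the lowest point $(r(0),\theta(0))$. This yields both that $\gamma$ consists of two symmetric pieces and that the boundary exit times satisfy $t_+=-t_-$.
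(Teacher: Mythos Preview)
Your argument for the uniqueness of the critical point of $r$ and for the symmetry of the two branches is essentially the same as the paper's; both deduce $\ddot r>0$ at every zero of $\dot r$ from~\eqref{eq:herglotzradialderivative} and conclude that $t=0$ is the only such zero, and both appeal to reversibility together with spherical symmetry for the mirror symmetry about the lowest point.

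The real difference is in the finite-time step, which you correctly flag as the delicate part. Your sketch (``$r$ increasing and bounded, $\dot r$ bounded below on compact pieces'') is not yet a proof: nothing prevents $\dot r(t)\to 0$ along a trajectory that spirals ever more slowly toward a limiting circle. The paper avoids a direct compactness argument altogether. It observes that for small $\epsilon>0$ one has $\gamma(\epsilon)\cdot\dot\gamma(\epsilon)>0$ (using $\dot r(\epsilon)>0$ and, after normalizing $\theta(0)=0$, also $\theta(\epsilon)\dot\theta(\epsilon)>0$), hence $\tfrac{\der}{\der t}\psi(\gamma(t))|_{t=\epsilon}<0$ for the foliation function $\psi(x)=1-\abs{x}^2$. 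It then invokes \cite[Lemma~14]{deILS-broken-scattering-rigidity}, which says that on a manifold with strictly convex foliation any geodesic with $\tfrac{\der}{\der t}\psi<0$ at some instant reaches the boundary in finite forward time. Applying the same reasoning to the reversed geodesic handles the other end. This is cleaner than extracting a uniform lower bound on $|\dot r|$ by hand.

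Your direct route can be completed, but it takes more than you indicate: from $\int_0^\infty \dot r\,\der t<\infty$ and the boundedness of $\ddot r$ (smooth spray on a compact piece of $SM$) one gets $\dot r(t)\to 0$; unit speed then forces $\dot\theta$ to stay away from $0$, so by continuity the Herglotz condition yields $\ddot r(t)\ge c>0$ for all large $t$, contradicting $\dot r\to 0$. If you want a self-contained argument, this is the missing paragraph; otherwise, citing the convex-foliation non-trapping lemma as the paper does is the efficient fix.
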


\begin{proof}
Let~$\gamma$ be a geodesic such that $\dot{r}(0)=0$. By spherical symmetry we can assume without loss of generality\NTR{Added "without loss of generality".} that $\theta(0)=0$. Let $\psi(x)=1-\aabs{x}^2$ be as in the proof of lemma~\ref{lemma:foliation}.\NTR{Added the definition of the function $\psi$.} According to~\cite[Lemma 14]{deILS-broken-scattering-rigidity}, if~$\eta$ is a geodesic such that $\frac{\der}{\der t}\psi(\eta(t))|_{t=0}<0$, then~$\eta$ reaches the boundary in finite time as~$t$ increases. Now $\frac{\der}{\der t}\psi(\gamma(t))|_{t=0}=-2\gamma(0)\cdot\dot{\gamma}(0)=0$ so we can not directly use~\cite[Lemma 14]{deILS-broken-scattering-rigidity}. The Herglotz condition implies that $\ddot{r}(0)>0$ so there is $\epsilon_1>0$ such that $\dot{r}(t)>0$ for all $t\in (0, \epsilon_1)$. Since~$\gamma$ has unit speed and~$F$ is homogeneous we have $1=F(r(0), 0, 0, \dot{\theta}(0))=\abs{\dot{\theta}(0)}F(r(0), 0, 0, 1)$ so $\abs{\dot{\theta}(0)}\neq 0$, which implies that the angular variable $\theta(t)$ is either increasing or decreasing at $t=0$. In either case there is $\epsilon_2>0$ such that $\theta(t)\dot{\theta}(t)>0$ for all $t\in (0, \epsilon_2)$. These observations imply that there is $\epsilon>0$ such that $\gamma(\epsilon)\cdot\dot{\gamma}(\epsilon)=r(\epsilon)\dot{r}(\epsilon)+r^2(\epsilon)\theta(\epsilon)\dot{\theta}(\epsilon)>0$.\NTR{Added more details why such $\epsilon>0$ exists.} Now defining $\eta(t)=\gamma(t+\epsilon)$ we obtain that~$\frac{\der}{\der t}\psi(\eta(t))|_{t=0}<0$ so~$\eta$ and hence~$\gamma$ reaches the boundary in finite time as~$t$ increases. Using similar reasoning for the reversed geodesic $\overleftarrow{\gamma}(t)=\gamma(-t)$ we obtain that~$\gamma$ has finite length and reaches the boundary from its both ends. The symmetry of~$\gamma$ with respect to $(r(0), \theta(0))$ follows from the reversibility and spherical symmetry of~$F$. The Herglotz condition in turn implies that~$\dot{r}$ cannot have more than one zero since all critical points of~$r(t)$ have to be local minima. Hence~$\gamma$ consists of a rising part where $\dot{r}>0$ and a descending part where $\dot{r}<0$.
\end{proof}

Lemma~\ref{lemma:non-trapping} implies that those geodesics which are initially tangential to circles have finite length. This is enough for us since we only use this type of geodesics in the proof of theorem~\ref{thm:maintheorem}. We note that in the Riemannian case (i.e. for the metric $g=c^{-2}(r)e$) the Herglotz condition implies that the whole manifold is non-trapping (see e.g.~\cite{MO-inverse-kinematic, PSU-geometric-inverse-book}). We do not know if this is true also in our Finslerian setting. 

The Herglotz condition also allows us to make a change of coordinates on geodesics which are initially tangential to circles. We can interchange between the time parameter~$t$ and the radial coordinate~$r$ since by the Herglotz condition we have $\partial r/\partial t>0$ on the rising part and $\partial r/\partial t<0$ on the descending part of geodesics (see lemma~\ref{lemma:non-trapping}). Here we have a partial derivative since generally~$r$ also depends on the lowest point $(r_0, \theta_0)$ of the geodesic. In particular, we can change between the coordinates $(r_0, t)$ and $(r_0, r)$. This coordinate transformation is treated in more detail in sections~\ref{sec:taylorexpansions} and~\ref{sec:coordinatechange}.\NTR{Added this paragraph on change of coordinates.}

\subsection{Geodesic ray transform and Abel transforms}
\label{sec:geodesicraytransform}
Throughout this section we assume that $M\subset\R^2$ is an annulus centered at the origin equipped with a spherically symmetric reversible Finsler norm~$F$ satisfying the Herglotz condition~\eqref{eq:herglotzcond}.

The geodesic ray transform of a scalar field $f\colon M\to\R$ is defined as
$$
\geod f(\gamma)=\int_\gamma f\der s
$$
where~$\gamma$ is a unit speed geodesic. The integrals are finite for sufficiently regular functions when the geodesic~$\gamma$ has finite length (e.g. if~$\gamma$ has unique lowest point to the origin). 

We use the following angular Fourier series expansion which allows us to write the geodesic ray transform of~$f$ in terms of the geodesic ray transforms of the component functions $f_k(r, \theta)=a_k(r)e^{ik\theta}$.

\begin{lemma}[{\cite[Lemma 20]{deI:abel-transforms-x-ray-tomography}}]
\label{lemma:angularfourier}
If $f\in L^2(M)$, then it can be written as an angular Fourier series
\begin{equation}
\label{eq:angularfourier}
f(r, \theta)=\sum_{k\in\Z} a_k(r)e^{ik\theta}
\end{equation}
where $a_k\in L^2([R, 1])$ and the series convergences to~$f$ with respect to the $L^2(M)$-norm.
\end{lemma}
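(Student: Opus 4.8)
The plan is to reduce the statement to the standard fact that $\{e^{ik\theta}/\sqrt{2\pi}\}_{k\in\Z}$ is a complete orthonormal system in $L^2(S^1)$, combined with Fubini's theorem in polar coordinates. First I would fix notation: writing a point of $M$ in polar coordinates $(r,\theta)\in[R,1]\times[0,2\pi)$, the Lebesgue measure on $M\subset\R^2$ becomes $r\,\der r\,\der\theta$, so $f\in L^2(M)$ means $\int_R^1\int_0^{2\pi}\abs{f(r,\theta)}^2\,r\,\der\theta\,\der r<\infty$. By Fubini, for almost every $r\in[R,1]$ the slice $\theta\mapsto f(r,\theta)$ lies in $L^2(S^1)$, and for such $r$ we may expand it in the angular Fourier basis, defining
\begin{equation}
a_k(r)=\frac{1}{2\pi}\int_0^{2\pi} f(r,\theta)e^{-ik\theta}\,\der\theta .
\end{equation}
Parseval's identity on $S^1$ gives $\sum_{k\in\Z}\abs{a_k(r)}^2=\frac{1}{2\pi}\int_0^{2\pi}\abs{f(r,\theta)}^2\,\der\theta$ for a.e.\ $r$.

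Next I would establish that each $a_k$ lies in $L^2([R,1])$. The Cauchy--Schwarz inequality applied to the defining integral of $a_k(r)$ yields $\abs{a_k(r)}^2\le\frac{1}{2\pi}\int_0^{2\pi}\abs{f(r,\theta)}^2\,\der\theta$, and multiplying by the (bounded, bounded-below) weight $r$ and integrating in $r$ over $[R,1]$ shows $\int_R^1\abs{a_k(r)}^2\,\der r<\infty$ since the right-hand side is controlled by $\aabs{f}_{L^2(M)}^2$. (Equivalently, one can integrate the Parseval identity against $r\,\der r$ and use monotone convergence to get $\sum_k\int_R^1\abs{a_k(r)}^2\,r\,\der r=\aabs{f}_{L^2(M)}^2$, which simultaneously gives $a_k\in L^2$ and the convergence below.) Here the fact that $R>0$ is what makes the weight $r$ equivalent to a constant, so no integrability subtlety near the origin arises.

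Finally I would verify $L^2(M)$-convergence of the series $\sum_{k\in\Z}a_k(r)e^{ik\theta}$ to $f$. Let $S_N(r,\theta)=\sum_{\abs{k}\le N}a_k(r)e^{ik\theta}$ be the partial sums. For a.e.\ fixed $r$, $\theta\mapsto S_N(r,\theta)$ converges to $\theta\mapsto f(r,\theta)$ in $L^2(S^1)$, so $\int_0^{2\pi}\abs{f(r,\theta)-S_N(r,\theta)}^2\,\der\theta\to0$; moreover this quantity is dominated by $\int_0^{2\pi}\abs{f(r,\theta)}^2\,\der\theta$, which is integrable against $r\,\der r$ on $[R,1]$. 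By the dominated convergence theorem,
\begin{equation}
\aabs{f-S_N}_{L^2(M)}^2=\int_R^1\Big(\int_0^{2\pi}\abs{f(r,\theta)-S_N(r,\theta)}^2\,\der\theta\Big)r\,\der r\longrightarrow 0
\end{equation}
as $N\to\infty$, which is the claimed convergence. This is essentially a soft functional-analytic argument, so there is no real obstacle; the only point requiring a moment's care is the measurability of $r\mapsto a_k(r)$ and the application of Fubini, which is routine once one notes that $(r,\theta)\mapsto f(r,\theta)e^{-ik\theta}$ is measurable and absolutely integrable on $[R,1]\times[0,2\pi)$ with respect to $r\,\der r\,\der\theta$. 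Since the statement is quoted from \cite[Lemma 20]{deI:abel-transforms-x-ray-tomography}, one could alternatively just cite it, but the above is the self-contained argument.
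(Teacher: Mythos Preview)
Your argument is correct and is exactly the standard route: Fubini to get slices in $L^2(S^1)$, define $a_k(r)$ as the angular Fourier coefficient, Parseval on each circle, then dominated convergence in $r$ (with the weight $r$ harmless since $R>0$) to upgrade to $L^2(M)$-convergence. There is nothing to compare against in the paper itself, since the lemma is merely quoted from~\cite[Lemma 20]{deI:abel-transforms-x-ray-tomography} without proof; your self-contained argument is precisely what one would write to justify the citation.
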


Let $\gamma\colon [-T, T]\rightarrow M$, $\gamma(t)=(r(t), \theta_0+\omega(t))$, be a geodesic with lowest point $(r_0, \theta_0)$ and highest point at $r=1$. Since~$F$ is reversible~$\gamma$ is symmetric with respect to $(r_0, \theta_0)$ (see lemma~\ref{lemma:non-trapping}), i.e. $(r(-t), \theta(-t))=(r(t), \theta_0-\theta(t))$. Using this symmetry and change of variables $t\to r$ (which is also possible due to lemma~\ref{lemma:non-trapping}) we obtain the following formula for the Fourier components
\begin{align}
\geod f_k(r_0, \theta_0)&=\int_{-T}^T a_k(r(t))e^{ik(\theta_0+\omega(t))}\der t=2e^{ik\theta_0}\int_0^T a_k(r(t))\cos(k\omega(t))\der t \nonumber \\
&= 2e^{ik\theta_0}\int_{r_0}^1 (r-r_0)^{-1/2}\widetilde{K}_k(r_0, r)a_k(r)\der r=2e^{ik\theta_0}\abel_ka_k(r_0) \label{eq:geodesictransformfouriercomponent}
\end{align}
where $\widetilde{K}_k(r_0, r)=K(r_0, r)\cos(k\omega(r_0, r))$, $K(r_0, r)=(r-r_0)^{1/2}(\dot{r}(r_0, r))^{-1}$ and
\begin{equation}
\label{eq:abel}
\abel_k h(x)=\int_x^1(y-x)^{-1/2}\widetilde{K}_k(x, y)h(y)\der y.
\end{equation}
We call the integral transform~$\abel_k$ in~\eqref{eq:abel} a generalized Abel transform.

The Abel transforms~$\abel_k$ are a special case of the more general integral transforms
\begin{equation}
\label{eq:generalintegraltransform}
I_{\mathcal{K}}^\alpha h(x)=\int_x^1 (y-x)^{-\alpha}\mathcal{K}(x, y)h(y)\der y
\end{equation}
where $\mathcal{K}\colon\Delta\to\R$ is any bounded function, $\alpha\in [0, 1)$, $\Delta=\{(u_1, u_2)\in\R^2: 0\leq u_1\leq u_2\leq 1\}$ and $h\colon [0, 1]\to \R$ is regular enough so that the integral in~\eqref{eq:generalintegraltransform} is well-defined. These type of integral transforms were studied in~\cite{deI:abel-transforms-x-ray-tomography} and they satisfy the following important properties.

\begin{lemma}[{\cite[Theorem 4]{deI:abel-transforms-x-ray-tomography}}]
\label{lemma:generaltransformcontinuity}
The transform $I_{\mathcal{K}}^\alpha\colon L^p([0, 1])\to L^q([0, 1])$ is well-defined and continuous when $\alpha+1/p<1+1/q$. In particular, this holds when $p>1/(1-\alpha)$, $q<1/\alpha$ or $p=q$. The norm of this mapping satisfies $\aabs{I_{\mathcal{K}}^\alpha}_{L^p\to L^q}=\bigoh(\sup_\Delta\abs{\mathcal{K}})$.
\end{lemma}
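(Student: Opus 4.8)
The plan is to reduce the statement to the classical Young convolution inequality on~$\R$. First I would discard the kernel: since~$\mathcal{K}$ is bounded on~$\Delta$, for $x\in[0,1]$ one has the pointwise bound
\[
\abs{I_{\mathcal{K}}^\alpha h(x)}\le\Big(\sup_\Delta\abs{\mathcal{K}}\Big)\int_x^1(y-x)^{-\alpha}\abs{h(y)}\,\der y=:\Big(\sup_\Delta\abs{\mathcal{K}}\Big)\,Jh(x),
\]
so it suffices to prove that the model operator~$J$ maps $L^p([0,1])$ into $L^q([0,1])$ continuously with operator norm depending only on $\alpha,p,q$; the prefactor then produces the asserted $\bigoh(\sup_\Delta\abs{\mathcal{K}})$ estimate, and in particular the linear dependence on $\sup_\Delta\abs{\mathcal{K}}$.

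Next I would realize~$J$ as a convolution on the line. Extending~$h$ by zero outside $[0,1]$ and setting $k(u)=(-u)^{-\alpha}\mathbf{1}_{(-1,0)}(u)$, one checks that $Jh(x)=(k*\abs{h})(x)$ for $x\in[0,1]$. The kernel obeys $\aabs{k}_{L^s(\R)}^s=\int_0^1 t^{-s\alpha}\,\der t<\infty$ exactly when $s\alpha<1$, so $k\in L^s(\R)$ for every $s\in[1,1/\alpha)$; in particular, since $\alpha<1$, one always has $k\in L^1(\R)$.

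Then I would apply Young's inequality $\aabs{k*G}_{L^q(\R)}\le\aabs{k}_{L^s(\R)}\aabs{G}_{L^p(\R)}$, which is valid whenever $s\in[1,\infty]$ and $1+\tfrac1q=\tfrac1s+\tfrac1p$. In the case $q\ge p$ the exponent $\tfrac1s=1+\tfrac1q-\tfrac1p$ lies in $[0,1]$, and the hypothesis $\alpha+\tfrac1p<1+\tfrac1q$ says precisely that $\tfrac1s>\alpha$, i.e. $s<1/\alpha$; hence $k\in L^s$ and Young gives $\aabs{Jh}_{L^q[0,1]}\le\aabs{k}_{L^s}\aabs{h}_{L^p[0,1]}$. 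In the remaining case $q<p$ I would instead take $s=1$ (admissible since $\alpha<1$), obtaining $J\colon L^p[0,1]\to L^p[0,1]$, and then compose with the continuous inclusion $L^p[0,1]\hookrightarrow L^q[0,1]$ (the interval has total measure~$1$, so the embedding constant is~$1$). Either way $J$, and hence $I_{\mathcal{K}}^\alpha$, is continuous from $L^p$ to $L^q$ with norm $\bigoh(\sup_\Delta\abs{\mathcal{K}})$; the three ``in particular'' cases listed in the statement follow by checking that each of them implies $\alpha+\tfrac1p<1+\tfrac1q$.

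The main obstacle is only the exponent bookkeeping: one must verify that ``$s\in[1,\infty]$ with $1+1/q=1/s+1/p$'' together with ``$k\in L^s$, i.e. $s<1/\alpha$'' is exactly equivalent to the stated condition $\alpha+1/p<1+1/q$, and one must split off the regime $q<p$ --- which is not covered by Young directly but reduces to the case $q=p$ via $L^p$-nesting over the finite interval --- from the regime $q\ge p$ where Young applies with a subcritical~$s$. Beyond this the only points to watch are the endpoint $\alpha=0$, where~$k$ is bounded and everything is immediate, and making sure the final constant is independent of~$\mathcal{K}$ apart from the factor $\sup_\Delta\abs{\mathcal{K}}$.
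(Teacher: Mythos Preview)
Your argument is correct. The reduction to the convolution $Jh=k*\abs{h}$ with $k(u)=(-u)^{-\alpha}\mathbf{1}_{(-1,0)}(u)$ and the appeal to Young's inequality is the standard route, and your bookkeeping of the exponents (including the split into $q\ge p$ via Young and $q<p$ via $L^p\hookrightarrow L^q$ on the unit interval) is clean.

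As for comparison: the paper does not supply its own proof of this lemma. It is quoted verbatim as \cite[Theorem 4]{deI:abel-transforms-x-ray-tomography} and used as a black box, so there is nothing in the present paper to compare your argument against. Your proof is essentially the one that appears in the cited source, where the result is likewise obtained from Young's convolution inequality after the same pointwise reduction $\abs{I_{\mathcal{K}}^\alpha h}\le(\sup_\Delta\abs{\mathcal{K}})\,Jh$.
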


\begin{lemma}[{\cite[Theorem 12]{deI:abel-transforms-x-ray-tomography}}]
\label{lemma:generaltransforminjective}
Let $\alpha\in [0, 1)$. Suppose $\mathcal{K}\colon\Delta\to\R$ is bounded everywhere, non-zero on the diagonal $\{(u, u)\in\R^2: 0\leq u\leq 1\}$ and Lipschitz continuous in some neighborhood of the diagonal. If $h\in L^1([0, 1])$ satisfies $I_{\mathcal{K}}^\alpha h(x)=0$ for almost all $x\geq r$ for some $r\in [0, 1)$, then $h(x)=0$ for almost all $x\geq r$. In particular, $I_{\mathcal{K}}^\alpha\colon L^1([0, 1])\to L^1([0, 1])$ is injective.
\end{lemma}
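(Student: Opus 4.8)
\emph{Proof proposal.}
My plan is to adapt the usual strategy for Abel-type equations: localize near the diagonal, turn the first-kind equation $I^\alpha_{\mathcal K}h=0$ into a homogeneous Volterra equation of the \emph{second} kind by composing with the plain fractional integral, and then invoke uniqueness for such equations. Assume first $\alpha\in(0,1)$. Fix $\delta>0$ so small that $\{(x,y)\in\Delta:y-x<\delta\}$ lies inside the Lipschitz neighbourhood of the diagonal, and record two elementary facts: $|\mathcal K(x,y)-\mathcal K(x,x)|\le C(y-x)$ for \emph{every} $(x,y)\in\Delta$ (Lipschitz continuity when $y-x<\delta$, boundedness of $\mathcal K$ otherwise), and $u\mapsto\mathcal K(u,u)$ is continuous and nowhere zero on the compact diagonal, so $|\mathcal K(u,u)|$ is bounded away from $0$ and $u\mapsto\mathcal K(u,u)^{-1}$ is bounded and Lipschitz near the diagonal. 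Since $I^\alpha_{\mathcal K}h(x)$ depends only on $h|_{[x,1]}$, I would partition $[r,1]$ as $r=t_0<\dots<t_N=1$ with $t_j-t_{j-1}<\delta$ and establish $h=0$ a.e.\ successively on $[t_{N-1},1]$, $[t_{N-2},t_{N-1}]$, \dots, $[t_0,t_1]$. Hence it suffices to prove the \emph{local claim}: if $[a,b]\subset[r,1]$ with $b-a<\delta$, if $h\in L^1([a,1])$ vanishes a.e.\ on $[b,1]$, and if $I^\alpha_{\mathcal K}h=0$ a.e.\ on $[a,b]$, then $h=0$ a.e.\ on $[a,b]$.

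To prove the local claim, write $\mathcal K(x,y)=\mathcal K(x,x)+\kappa(x,y)$ with $|\kappa(x,y)|\le C(y-x)$, and let $I^\gamma g(x):=\int_x^b(y-x)^{-\gamma}g(y)\,\der y$ denote the plain fractional integral (the upper limit is $b$ because $h$ is supported in $[a,b]$). The hypothesis reads $\mathcal K(x,x)\,I^\alpha h(x)+\int_x^b(y-x)^{-\alpha}\kappa(x,y)h(y)\,\der y=0$ on $[a,b]$, i.e.\ $I^\alpha h=-w$ with $w(x):=\mathcal K(x,x)^{-1}\int_x^b(y-x)^{-\alpha}\kappa(x,y)h(y)\,\der y$; its kernel $(y-x)^{-\alpha}\kappa(x,y)$ is $O((y-x)^{1-\alpha})$, hence bounded, so $w\in L^\infty([a,b])$. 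Applying the plain Abel inverse $I^{1-\alpha}$ and using $\int_x^y(z-x)^{\alpha-1}(y-z)^{-\alpha}\,\der z=c_\alpha$ (a positive constant, independent of $x,y$) together with Fubini, one gets $I^{1-\alpha}I^\alpha h(x)=c_\alpha H(x)$ with $H(x):=\int_x^b h$, whence
$$c_\alpha H(x)=-I^{1-\alpha}w(x)=\int_x^b V(x,y)\,h(y)\,\der y,$$
where $V(x,y):=-\int_x^y(z-x)^{\alpha-1}\mathcal K(z,z)^{-1}(y-z)^{-\alpha}\kappa(z,y)\,\der z$ and all the interchanges are legitimate because $h\in L^1$ and the singular factors are integrable. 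Substituting $z=x+s(y-x)$ normalizes the moving singularities: $V(x,y)=-\int_0^1 s^{\alpha-1}(1-s)^{-\alpha}\,\mathcal K(\zeta,\zeta)^{-1}\kappa(\zeta,y)\,\der s$ with $\zeta:=x+s(y-x)$. Since $s^{\alpha-1}(1-s)^{-\alpha}\in L^1(0,1)$ while $(x,y)\mapsto\mathcal K(\zeta,\zeta)^{-1}\kappa(\zeta,y)$ is bounded and Lipschitz on $[a,b]^2$ uniformly in $s$, it follows that $V$ is bounded and Lipschitz, $V(x,x)=0$ (because $\kappa(x,x)=0$), and $\partial_y V$ exists a.e.\ and is bounded, by differentiation under the integral sign.

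Finally I would integrate by parts. As $H$ is absolutely continuous on $[a,b]$ with $H'=-h$ a.e., $H(b)=0$, and $V(x,x)=0$, the identity above becomes $c_\alpha H(x)=-\int_x^b V(x,y)H'(y)\,\der y=\int_x^b \partial_y V(x,y)\,H(y)\,\der y$. Thus $H$ solves a homogeneous second-kind Volterra equation with bounded kernel, and Gr\"onwall's inequality (equivalently, the Neumann series, whose $n$-th iterated kernel carries the factor $1/n!$) forces $H\equiv0$ on $[a,b]$, so $h=-H'=0$ a.e.\ there. This proves the local claim and hence the theorem; taking $r=0$ gives injectivity of $I^\alpha_{\mathcal K}$ on $L^1([0,1])$. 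The case $\alpha=0$ runs the same way but is simpler, skipping the composition step: now $I^0h=H$ already, so $\mathcal K(x,x)H(x)=-\int_x^b\kappa(x,y)h(y)\,\der y$, and the same integration by parts gives $\mathcal K(x,x)H(x)=-\int_x^b\partial_y\mathcal K(x,y)H(y)\,\der y$, once more a bounded-kernel second-kind equation.

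I expect the real difficulty to be this second-kind reduction: checking that composing $I^{1-\alpha}$ with the rearranged equation yields an operator whose kernel is bounded (or at least uniformly weakly singular), and justifying the integration by parts against $h=-H'$ when $h$ is merely $L^1$ and $\mathcal K$ is merely Lipschitz near the diagonal. The devices that make this work are (a) restricting to subintervals inside the Lipschitz neighbourhood, so that $\partial_y\mathcal K$ — and hence $\partial_y V$ — is controlled; (b) carrying the argument on the absolutely continuous primitive $H$ rather than on $h$; and (c) the substitution $z=x+s(y-x)$, which rewrites $V$ as an integral against the fixed integrable weight $s^{\alpha-1}(1-s)^{-\alpha}$ with a Lipschitz remaining factor. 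The rest — the peeling over a finite partition, the Beta-type identities, and the closing Gr\"onwall argument — should be routine.
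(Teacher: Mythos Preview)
The paper does not prove this lemma at all: it is quoted verbatim as \cite[Theorem 12]{deI:abel-transforms-x-ray-tomography}, so there is no in-paper proof to compare against. Your proposal therefore supplies what the present paper deliberately omits.

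On its own merits your argument is sound and is essentially the classical route for Volterra equations of the first kind with weakly singular kernel: peel off a diagonal neighbourhood, split $\mathcal K(x,y)=\mathcal K(x,x)+\kappa(x,y)$, compose with $I^{1-\alpha}$ to lift the singularity, and reduce to a homogeneous second-kind equation on the primitive $H$. The key technical points you flag---(a) working on subintervals of length $<\delta$ so that the Lipschitz bound is available, (b) the substitution $z=x+s(y-x)$ turning $V$ into an integral of a Lipschitz function against the fixed $L^1$ weight $s^{\alpha-1}(1-s)^{-\alpha}$, and (c) carrying the integration by parts on the absolutely continuous $H$ rather than on $h$---are exactly what makes the reduction rigorous. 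One small clarification worth making explicit: you do not actually need $\partial_yV$ to exist via differentiation under the integral sign; it suffices that $y\mapsto V(x,y)$ is Lipschitz (which your substitution argument gives directly), since Lipschitz implies absolutely continuous and hence the integration-by-parts formula for AC functions applies with $\partial_yV\in L^\infty$ defined a.e. With that adjustment the proof goes through cleanly for $\alpha\in(0,1)$, and your remark on the $\alpha=0$ case is correct as well.
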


The above lemmas hold also if we replace~$\Delta$ with $\Delta_R=\{(u_1, u_2)\in\R^2: R\leq u_1\leq u_2\leq 1\}$ (see~\cite{deI:abel-transforms-x-ray-tomography} for details). We show in sections~\ref{sec:proofofmainresult} and~\ref{sec:regularityofkernel} that the Abel transforms~$\abel_k$ defined by equation~\eqref{eq:abel} satisfy the assumptions in lemmas~\ref{lemma:generaltransformcontinuity} and~\ref{lemma:generaltransforminjective}.

\section{Proof of the main theorem}
\label{sec:proofofmainresult}
In this section we prove our main theorem.
The idea of the proof is the following. Using angular Fourier series the injectivity problem can be reduced to the invertibility problem of generalized Abel transforms acting on the Fourier components of~$f$. Writing the Taylor expansions of geodesics and analyzing the error terms we show in section~\ref{sec:regularityofkernel} that the integral kernel $\widetilde{K}_k=\widetilde{K}_k(r_0, r)$ satisfies the regularity properties which are needed in lemmas~\ref{lemma:generaltransformcontinuity} and~\ref{lemma:generaltransforminjective}. From this it follows that the Abel transforms~$\abel_k$ are injective, implying that the Fourier components of~$f$ all have to vanish, which proves the claim.

The following two lemmas form the core of our proof since they imply that the integral kernel~$\widetilde{K}_k$ is locally regular enough so that we can use the theory of Abel transforms developed in~\cite{deI:abel-transforms-x-ray-tomography} (i.e. lemmas~\ref{lemma:generaltransformcontinuity} and~\ref{lemma:generaltransforminjective}).
\begin{lemma}
\label{lemma:abelkernel}
Let $R\in (0, 1)$ and $\Delta_R=\{(u_1, u_2)\in\R^2: R\leq u_1\leq u_2\leq 1\}$.
Define the integral kernel $K\colon\Delta_R\rightarrow\mathbb{R}$ as
$$
K(r_0, r)=(r-r_0)^{1/2}(\dot{r}(r_0, r))^{-1}
$$
where $\dot{r}=\dot{r}(r_0, t)$ is obtained from the solution $\gamma(r_0, t)=(r(r_0, t), \theta(r_0, t))$ of the geodesic equation with initial conditions $r(0)=r_0$ and $\dot{r}(0)=0$ and we have changed the parameter $t\to r$ using lemma~\ref{lemma:non-trapping}.
Then $K=K(r_0, r)$ is bounded everywhere in~$\Delta_R$, non-zero on the diagonal $\{(u, u): R\leq u\leq 1\}$ and Lipschitz continuous in a small neighborhood of the diagonal.
\end{lemma}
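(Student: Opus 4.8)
The plan is to reduce everything to a controlled Taylor expansion of the radial coordinate $r(r_0,t)$ near the turning time $t=0$, and then carry the resulting estimates through the parameter change $t\to r$. The starting observation is that for a geodesic tangential to the circle $\{|x|=r_0\}$ we have $\dot r(0)=0$ and, by the Herglotz condition in the form~\eqref{eq:herglotzradialderivative}, $\ddot r(0)>0$. Writing $r(r_0,t)=r_0+\tfrac12\ddot r(r_0,0)t^2+\Order(t^3)$ and $\dot r(r_0,t)=\ddot r(r_0,0)\,t+\Order(t^2)$, I would first establish that these expansions hold with error terms that are uniform (with all the constants and the leading coefficient $\ddot r(r_0,0)$ depending smoothly on $r_0$ over the compact range $r_0\in[R,1]$). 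This uniformity is where smoothness of $F$ and smooth dependence of geodesics on initial conditions enters. It follows that $r-r_0$ and $\dot r^2$ are comparable: there are constants $0<c\le C<\infty$, uniform in $r_0$, with $c\,(r-r_0)\le \dot r(r_0,t)^2\le C\,(r-r_0)$ for $t$ in a neighborhood of $0$ (equivalently, for $r$ in a neighborhood of $r_0$). Hence $K(r_0,r)=(r-r_0)^{1/2}/\dot r(r_0,r)$ stays in a bounded interval bounded away from $0$ near the diagonal, and in particular $K(u,u)=\lim_{r\to u}(r-u)^{1/2}/\dot r(u,r)=\ddot r(u,0)^{-1/2}\neq 0$, giving non-vanishing on the diagonal. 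This part of the argument is essentially the content of section~\ref{sec:regularityofkernel}, which the statement tells us to invoke, so here I would only sketch it.

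Away from the diagonal, boundedness of $K$ on all of $\Delta_R$ is easier: by lemma~\ref{lemma:non-trapping} the geodesic has $\dot r>0$ strictly on the rising part, and $\dot r(r_0,r)$ is a positive continuous function of $(r_0,r)$ on the compact set $\{R\le r_0\le r\le 1\}$ minus a neighborhood of the diagonal, hence bounded below there; combined with $(r-r_0)^{1/2}\le 1$ this bounds $K$. Patching this with the near-diagonal estimate of the previous paragraph gives boundedness everywhere on $\Delta_R$.

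For Lipschitz continuity near the diagonal, the cleanest route is to change variables so that the square-root singularity disappears. On the rising part introduce $s$ with $r(r_0,t)=r_0+s^2$; since $r-r_0=\tfrac12\ddot r(r_0,0)t^2(1+\Order(t))$ and $\ddot r(r_0,0)>0$, the map $t\mapsto s$ is a smooth diffeomorphism near $t=0$ with $s=\sqrt{\ddot r(r_0,0)/2}\,t\,(1+\Order(t))$, depending smoothly on $r_0$. In the $(r_0,s)$ variables one computes $\dot r=\der r/\der t=2s\,(\der s/\der t)^{-1}\!\cdots$ — more directly, $\der r/\der t = (\der r/\der s)(\der s/\der t)=2s/(\der t/\der s)$, so $K=(r-r_0)^{1/2}/\dot r = s\cdot (\der t/\der s)/(2s) = \tfrac12\,\der t/\der s$, which is a smooth function of $(r_0,s)$ near $s=0$ (it equals $\sqrt{2/\ddot r(r_0,0)}\,/2$ at $s=0$). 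Since $s=s(r_0,r)$ is itself Lipschitz — indeed Hölder-$1/2$ and no worse — as a function of $(r_0,r)$ near the diagonal (from $s=\sqrt{r-r_0}$ up to a smooth nonvanishing factor), the composition $K(r_0,r)=\tfrac12\,(\der t/\der s)(r_0,s(r_0,r))$ is Lipschitz in $(r_0,r)$: a smooth function of $s$ composed with a Lipschitz map is Lipschitz, and the $\partial_{r_0}$ dependence is smooth throughout. Care is needed to check that the Lipschitz constant is uniform in $r_0\in[R,1]$, which again follows from the uniform (in $r_0$) control of all the Taylor coefficients and remainders.

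The main obstacle is precisely the uniformity of the expansions: one must show that $\ddot r(r_0,0)$ is bounded below by a positive constant as $r_0$ ranges over $[R,1]$ (it is, since it equals $\tfrac12 g^{11}\partial_r F^2$ evaluated at a tangential direction, a continuous positive function of $r_0$ on a compact set), and that the $\Order(t^3)$ remainders, together with the derivative $\der t/\der s$, are controlled uniformly in $r_0$ on a neighborhood of the diagonal whose size does not shrink to zero. In the Riemannian case of~\cite{deI:abel-transforms-x-ray-tomography} these quantities were explicit; here they are only implicit solutions of the geodesic equation, so the real work — deferred to section~\ref{sec:regularityofkernel} — is extracting these uniform bounds from the smoothness of $F$ and the smooth dependence of solutions of~\eqref{eq:geodesicradial} on the parameter $r_0$. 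Granting those bounds, the three claimed properties (bounded on $\Delta_R$, nonzero on the diagonal, Lipschitz near the diagonal) follow as above.
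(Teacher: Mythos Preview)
Your substitution $s=\sqrt{r-r_0}$ and the identity $K=\tfrac12\,\der t/\der s$ are a nice idea, and genuinely different from the paper's approach. But the Lipschitz step as written has a gap. You say $s(r_0,r)$ is ``Lipschitz --- indeed H\"older-$1/2$ and no worse'' and then invoke ``smooth composed with Lipschitz is Lipschitz''. These two statements are incompatible: $s=\sqrt{r-r_0}$ is only H\"older-$1/2$ near the diagonal, in both variables (note $\partial_{r_0}s=-1/(2\sqrt{r-r_0})$ also blows up, so your remark that ``the $\partial_{r_0}$ dependence is smooth throughout'' is not correct either). A generic smooth function of $s$ composed with a H\"older-$1/2$ map is only H\"older-$1/2$, so without further input you do not get Lipschitz continuity of $K$.

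The missing ingredient is reversibility, which you never invoke at this step. Since $F$ is reversible, $r(r_0,-t)=r(r_0,t)$, so $r-r_0$ is an \emph{even} smooth function of $t$. Writing $r-r_0=t^2 h(r_0,t)$ with $h$ smooth, even in $t$, and $h(r_0,0)=\tfrac12\ddot r(r_0,0)>0$, one gets that $s=t\sqrt{h(r_0,t)}$ is an odd smooth function of $t$; hence $t=t(r_0,s)$ is odd in $s$ and $\der t/\der s$ is \emph{even} in $s$. An even smooth function of $s$ is a smooth function of $s^2=r-r_0$, so in fact $K(r_0,r)=H(r_0,r-r_0)$ with $H$ smooth, and Lipschitz continuity near the diagonal follows at once. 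Your $\Order(t^3)$ remainders alone (which do not use $\dddot r(r_0,0)=0$) are not enough to see this.

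For comparison, the paper does not use your substitution. It first records the weak reversibility $\dddot r(r_0,0)=0$ to sharpen the expansions to $\dot r=a(r_0)t+\Order(t^3)$ and $r-r_0=\tfrac12 a(r_0)t^2+\Order(t^4)$, and then computes $\partial_r K$ and $\partial_{r_0}K$ directly in coordinates, tracking by hand the cancellation of the leading singular terms to show both partial derivatives are $\Order(1)$ near the diagonal; Lipschitz continuity then follows from the mean value theorem. Your route, once patched with the evenness observation above, is arguably cleaner and more conceptual, but the version you wrote is incomplete.
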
 

\begin{lemma}
\label{lemma:angularchange}
Let $\omega(r_ 0, t)=\theta(r_0, t)-\theta_0$ be the angular change of a geodesic with lowest point $(r_0, \theta_0)$ and $R\in (0, 1)$. The map $(r_0, r)\mapsto \omega^2(r_0, r)$ is Lipschitz continuous in a small neighborhood of the diagonal $\{(u, u): R\leq u\leq 1\}$.
\end{lemma}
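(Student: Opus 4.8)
The plan is to reduce the Lipschitz continuity of $(r_0,r)\mapsto\omega^2(r_0,r)$ to a statement about the Taylor expansion of the geodesic near its lowest point $r_0$, exactly as is done for the kernel $K$ in Lemma~\ref{lemma:abelkernel}. The key observation is that the only potential singularity is on the diagonal, where $r\to r_0$, so that $t\to 0$ (with $t$ the arclength measured from the lowest point); everywhere else the geodesic flow depends smoothly on the initial data $(r_0,t)$ and the coordinate change $t\leftrightarrow r$ is a diffeomorphism by the Herglotz condition (lemmas~\ref{lemma:foliation} and~\ref{lemma:non-trapping}), so $\omega$ and hence $\omega^2$ are smooth there. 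Thus the whole issue is the behaviour of $\omega(r_0,r)$ as $r\downarrow r_0$.

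Concretely, I would first write the geodesic $\gamma(r_0,t)=(r(r_0,t),\theta_0+\omega(r_0,t))$ with $r(0)=r_0$, $\dot r(0)=0$ and expand in $t$ around $t=0$. Since $\dot r(0)=0$ and $\ddot r(0)=:\ddot r_0>0$ by the Herglotz condition~\eqref{eq:herglotzradialderivative}, one has $r(r_0,t)-r_0=\tfrac12\ddot r_0 t^2+\Order(t^3)$, so $t\sim\sqrt{2(r-r_0)/\ddot r_0}$ near the diagonal; in particular $t^2$ is a smooth function of $(r_0,r)$ near the diagonal, and $t$ itself behaves like $(r-r_0)^{1/2}$. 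On the other hand $\omega(0)=0$ and, by the symmetry of the geodesic with respect to its lowest point (lemma~\ref{lemma:non-trapping}), $\omega$ is an odd function of $t$, so $\omega(r_0,t)=\dot\omega_0\, t+\Order(t^3)$ with $\dot\omega_0=\dot\theta(r_0,0)\neq 0$ (this nonvanishing was already observed in the proof of lemma~\ref{lemma:non-trapping}). Squaring kills the odd powers: $\omega^2(r_0,t)=\dot\omega_0^2\,t^2+\Order(t^4)$, and since $t^2$, $t^4$ and the coefficients in the expansion are all smooth functions of $(r_0,r)$ near the diagonal (here one uses smooth dependence of the geodesic flow on initial conditions together with $r-r_0=\tfrac12\ddot r_0 t^2+\cdots$ being invertible for $t$ small), we conclude that $\omega^2(r_0,r)$ extends to a $C^1$— in fact smooth — function in a neighborhood of the diagonal, hence Lipschitz there.

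The main obstacle, and the reason this needs the careful Taylor analysis of section~\ref{sec:regularityofkernel} rather than a one-line argument, is making the error terms uniform and showing that after the substitution $t\mapsto r$ the remainders are genuinely $C^1$ (or Lipschitz) in $(r_0,r)$ jointly, not merely in each variable separately: the half-integer power $t\sim(r-r_0)^{1/2}$ means that individual odd powers of $t$ are only Hölder-$\tfrac12$ in $r$, and it is only the squaring together with oddness of $\omega$ in $t$ that restores Lipschitz regularity. Concretely one must verify that $\ddot r_0=\tfrac12 g^{11}\partial_r F^2$ and $\dot\theta(r_0,0)$ depend smoothly on $r_0$ (immediate from smoothness of $F$ and the unit-speed normalization $F(r_0,0,\dot\theta_0)=1$), and that the inverse function theorem applied to $t\mapsto r(r_0,t)$ on the rising branch produces a remainder that is jointly smooth; this is exactly the type of bookkeeping carried out for $K$, so the argument for $\omega^2$ runs in parallel and I would present it together with, or immediately after, the proof of lemma~\ref{lemma:abelkernel}.
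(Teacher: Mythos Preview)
Your strategy is correct but it is not the one the paper follows. You exploit the full parity of the geodesic at its lowest point --- $r(r_0,\cdot)$ even and $\omega(r_0,\cdot)$ odd in $t$ --- so that $\omega^2$ is a smooth even function of $t$; combined with the inversion $t^2\leftrightarrow r-r_0$ (implicit function theorem applied to the even function $r(r_0,t)$) this yields smoothness, hence Lipschitz continuity, of $(r_0,r)\mapsto\omega^2$. The paper instead writes $\omega(r_0,r)=\int_{r_0}^r(u^2-r_0^2)^{-1/2}\varphi(r_0,u)\,\der u$ with $\varphi=\dot\theta\,K\,(r+r_0)^{1/2}$, shows $\varphi$ is Lipschitz near the diagonal (using only the weak reversibility $\ddot\theta(r_0,0)=0$ and the Lipschitz regularity of $K$ already established in lemma~\ref{lemma:abelkernel}), and then differentiates the Abel-type integral directly via a formula from \cite{deI:abel-transforms-x-ray-tomography} to obtain $\partial_{r_0}\omega=\bigoh(t^{-1})$ and hence $\partial_{r_0}\omega^2=\bigoh(1)$. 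Your route is conceptually cleaner and delivers a stronger conclusion (smoothness rather than mere Lipschitz), but the step ``smooth and even in $t$ $\Rightarrow$ smooth in $t^2$'' is Whitney's theorem on even functions and should be cited or proved --- a finite Taylor polynomial plus $\Order(t^4)$ does not by itself give joint $C^1$-regularity of the remainder in $(r_0,r)$. The paper's route is more computational and leans on the previous lemma and an external differentiation identity, but is self-contained at the level of finitely many Taylor coefficients; so your expectation that the argument ``runs in parallel'' to lemma~\ref{lemma:abelkernel} is not how the paper actually proceeds.
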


The above lemmas are quite technical and laborious to prove. Therefore we have devoted our\NTR{Added "our".} own section to the proofs (see section~\ref{sec:regularityofkernel}). Lemmas~\ref{lemma:abelkernel} and~\ref{lemma:angularchange} imply the following important result for the Abel transforms~$\abel_k$.

\begin{lemma}
\label{lemma:abelinjective}
Let~$\abel_k$ be the Abel transforms defined by equation~\eqref{eq:abel}. Then $\abel_k\colon L^2([R, 1])\rightarrow L^2([R, 1])$ are equicontinuous. Furthermore, the transforms $\abel_k\colon L^1([R, 1])\to L^1([R, 1])$ are injective.
\end{lemma}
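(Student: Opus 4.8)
The plan is to verify that each Abel transform $\abel_k$ from~\eqref{eq:abel} is an instance of the general integral transform $I_{\mathcal{K}}^\alpha$ with $\alpha=1/2$ and kernel $\mathcal{K}(r_0,r)=\widetilde{K}_k(r_0,r)=K(r_0,r)\cos(k\omega(r_0,r))$, and then to apply Lemmas~\ref{lemma:generaltransformcontinuity} and~\ref{lemma:generaltransforminjective} (in their $\Delta_R$ versions). So the whole statement reduces to checking that $\widetilde{K}_k$ has the two properties required: (i) a uniform (in $k$) bound $\sup_{\Delta_R}\abs{\widetilde{K}_k}<\infty$ for the equicontinuity claim, and (ii) boundedness everywhere, non-vanishing on the diagonal, and Lipschitz continuity near the diagonal for the injectivity claim. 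Both follow by combining the structural facts about $K$ from Lemma~\ref{lemma:abelkernel} with the facts about $\omega^2$ from Lemma~\ref{lemma:angularchange}, together with elementary properties of $\cos$.

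First I would treat equicontinuity. Since $\abs{\cos(k\omega)}\le 1$ pointwise, Lemma~\ref{lemma:abelkernel} gives $\sup_{\Delta_R}\abs{\widetilde{K}_k}\le\sup_{\Delta_R}\abs{K}=:C<\infty$ with $C$ independent of $k$. Applying Lemma~\ref{lemma:generaltransformcontinuity} with $p=q=2$ and $\alpha=1/2$ (the condition $\alpha+1/p<1+1/q$ reads $1/2+1/2<1+1/2$, which holds) yields that each $\abel_k\colon L^2([R,1])\to L^2([R,1])$ is continuous with operator norm $\bigoh(\sup_{\Delta_R}\abs{\widetilde{K}_k})=\bigoh(C)$, uniformly in $k$; that is exactly equicontinuity.

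Next I would treat injectivity of $\abel_k\colon L^1([R,1])\to L^1([R,1])$. Here one must check the hypotheses of Lemma~\ref{lemma:generaltransforminjective} for $\mathcal{K}=\widetilde{K}_k$. Boundedness everywhere is again immediate from Lemma~\ref{lemma:abelkernel} and $\abs{\cos}\le1$. On the diagonal, Lemma~\ref{lemma:abelkernel} gives $K(u,u)\ne0$; one observes $\omega(u,u)=0$ (a geodesic whose lowest point is already at radius $u=1$, or more precisely the degenerate limiting geodesic, accumulates no angular change), so $\cos(k\omega(u,u))=\cos 0=1$ and hence $\widetilde{K}_k(u,u)=K(u,u)\ne0$ on the diagonal. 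For Lipschitz continuity near the diagonal, write $\widetilde{K}_k=K\cdot(\cos\circ(k\omega))$; $K$ is Lipschitz near the diagonal and bounded by Lemma~\ref{lemma:abelkernel}, and $\cos(k\omega)$ is Lipschitz near the diagonal because $\omega^2$ is Lipschitz there (Lemma~\ref{lemma:angularchange}) and the map $s\mapsto\cos(k\sqrt{s})$ is Lipschitz on $[0,\infty)$ (its derivative $-\tfrac{k}{2\sqrt s}\sin(k\sqrt s)$ is bounded by $k^2/2$, using $\abs{\sin u}\le\abs{u}$), so the composition $\cos(k\omega)=(\,s\mapsto\cos(k\sqrt s)\,)\circ\omega^2$ is Lipschitz; a product of bounded Lipschitz functions is Lipschitz. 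Then Lemma~\ref{lemma:generaltransforminjective} with $\alpha=1/2$ and $r=R$ gives: if $\abel_k h=0$ a.e.\ on $[R,1]$ then $h=0$ a.e., i.e.\ $\abel_k$ is injective on $L^1([R,1])$.

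I do not expect any genuine obstacle here: the two hard inputs, Lemmas~\ref{lemma:abelkernel} and~\ref{lemma:angularchange}, are already granted, and the remaining work is the bookkeeping above plus the two small elementary observations ($\omega(u,u)=0$ and the Lipschitz bound for $s\mapsto\cos(k\sqrt s)$). The only point that warrants a careful word is the precise meaning of the diagonal values $K(u,u)$ and $\omega(u,u)$ — these are the limiting values as $r\downarrow r_0$, handled in the proofs of the two technical lemmas — and making sure the neighbourhood of the diagonal on which Lipschitz continuity holds is the same one for $K$ and for $\omega^2$; intersecting the two neighbourhoods suffices.
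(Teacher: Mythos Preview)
Your proposal is correct and follows essentially the same approach as the paper: reduce to the abstract Lemmas~\ref{lemma:generaltransformcontinuity} and~\ref{lemma:generaltransforminjective} by checking that $\widetilde K_k$ is uniformly bounded (for equicontinuity) and bounded, nonzero on the diagonal, and Lipschitz near the diagonal (for injectivity), using Lemmas~\ref{lemma:abelkernel} and~\ref{lemma:angularchange}. The only cosmetic difference is in the justification that $(r_0,r)\mapsto\cos(k\omega(r_0,r))$ is Lipschitz near the diagonal: the paper phrases this via the analytic function $h$ with $\cos z=h(z^2)$, whereas you give the equivalent direct derivative bound for $s\mapsto\cos(k\sqrt{s})$; these are the same observation.
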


\begin{proof}
By lemma~\ref{lemma:abelkernel} the kernel~$K=K(r_0, r)$ is bounded. This implies that $\widetilde{K}_k(r_0, r)=K(r_0, r)\cos(k\omega(r_0, r))$ is bounded too and we can use lemma~\ref{lemma:generaltransformcontinuity} to deduce that~$\abel_k$ are equicontinuous on~$L^2([R, 1])$.
From lemmas~\ref{lemma:abelkernel} and~\ref{lemma:angularchange} we get that~$K=K(r_0, r)$ and~$(r_0, r)\mapsto \omega^2(r_0, r)$ are Lipschitz in a small neighborhood of the diagonal of~$\Delta_R$ and~$K$ is non-zero on the diagonal.
The Lipschitz continuity of $(r_0, r)\mapsto\cos(k\omega(r_0, r))$ in a neighborhood of the diagonal follows from the fact that $\cos(z)=h(z^2)$ where~$h$ is an analytic function formed from the Taylor series of cosine by replacing the powers~$z^{2n}$ with~$z^n$.
Therefore~$\widetilde{K}_k$ is bounded, non-zero on the diagonal and Lipschitz in a small neighborhood of the diagonal. We can use lemma~\ref{lemma:generaltransforminjective} to obtain that~$\abel_k$ are injective on $L^1([R, 1])$.
\end{proof}

Now we are ready to prove our main theorem. The proof is short since it relies on many auxiliary (and technical) lemmas.   

\begin{proof}[Proof of theorem~\ref{thm:maintheorem}]
As was mentioned in remark~\ref{remark:reduction} it is enough to prove the claim for $n=2$.
We have to show that if $\int_\gamma f\der s=0$ for all geodesics~$\gamma$, then $f=0$.
Using lemma~\ref{lemma:angularfourier} we write $f$ as angular Fourier series 
\begin{equation}
f(r, \theta)=\sum_{k\in\Z}a_k(r)e^{ik\theta}=\sum_{k\in\Z}f_k(r, \theta)
\end{equation}
where $a_k\in L^2([R, 1])$ and convergence is in $L^2(M)$.
We consider geodesics~$\gamma$ which have unique lowest point and hence finite length due to lemma~\ref{lemma:non-trapping}. 
By lemma~\ref{lemma:abelinjective} the Abel transforms~$\abel_k$ are equicontinuous which implies that~$\geod$ is continuous on~$L^2(M)$. Hence the geodesic ray transform $\geod f$ can be calculated termwise and we obtain (see equation~\eqref{eq:geodesictransformfouriercomponent})
\begin{equation}
\geod f(r, \theta)=\sum_{k\in\Z}\geod f_k(r, \theta)=2\sum_{k\in\Z}\abel_k a_k(r)e^{ik\theta}
\end{equation}
where we have parameterized geodesics with their closest point $(r, \theta)$ to the origin. 
Since $\geod f(\gamma)=0$ for all geodesics~$\gamma$ we have that $\geod f(r, \theta)=0$ for all $r\in (R, 1]$. This implies that $\abel_k a_k(r)=0$ for all $r\in (R, 1]$. Using injectivity of~$\abel_k$ (lemma~\ref{lemma:abelinjective}) we see that $a_k(r)=0$ for all $r\in (R, 1]$. Therefore $f=0$, giving the claim.
\end{proof}

\section{Regularity of the integral kernel}
\label{sec:regularityofkernel}
In this section we prove lemmas~\ref{lemma:abelkernel} and~\ref{lemma:angularchange}. The quite technical proofs are based on careful treatment of the Taylor expansions of component functions of geodesics.

\subsection{Taylor expansions and weak reversibility}
\label{sec:taylorexpansions}
Let $(r_0, \theta_0)\in M$. As~$F$ is reversible there is unique geodesic~$\gamma(r_0, t)=(r(r_0, t), \theta(r_0, t))$ modulo orientation so that $(r_0, \theta_0)$ is its lowest point. Here we have explicitly written down the dependence on~$r_0$ since we need to know the regularity of the integral kernel $\widetilde{K}_k=\widetilde{K}_k(r_0, r)$ with respect to~$r_0$. 
Reversibility of~$F$ also implies that $r(r_0, t)=r(r_0, -t)$ and $\theta(r_0, t)=\theta_0-\theta(r_0, -t)$ (see lemma~\ref{lemma:non-trapping}). Differentiating this with respect to~$t$ we obtain
\begin{equation}
\label{eq:weakreversibility}
\begin{cases}
\dddot{r}(r_0, 0)&=0 \\
\ddot{\theta}(r_0, 0)&=0.
\end{cases}
\end{equation}
Condition~\eqref{eq:weakreversibility} can be called ``weak reversibility" of a Finsler norm since it does not necessarily require reversibility. 

Using weak reversibility we write the Taylor expansion for the second derivative of the radial coordinate
$$
\ddot{r}(r_0, t)=a(r_0)+E_1(r_0, t), \quad E_1(r_0, t)=\bigoh(t^2),
$$
where $a(r_0)=\ddot{r}(r_0, 0)>0$ for all $r_0\in (R, 1)$ by the Herglotz condition.
Since $\dot{r}(r_0, 0)=0$ we can integrate the expansion for $\ddot{r}(r_0, t)$ to obtain that
\begin{align*}
\dot{r}(r_0, t)&=a(r_0)t+E_2(r_0, t) \quad\text{with}\quad E_2(r_0, t)=\bigoh(t^3) \text{ and} \\
r(r_0, t)-r_0&=\frac{a(r_0)t^2}{2}+E_3(r_0, t) \quad\text{with}\quad E_3(r_0, t)=\bigoh(t^4).
\end{align*}
Here $f(t)=\bigoh(h(t))$ means that $\abs{f(t)}\leq M\abs{h(t)}$ for small~$t$ where $M>0$ is constant.
Note that the maps $(r_0, t)\mapsto r(r_0, t)$ and $(r_0, t)\mapsto \theta(r_0, t)$ are smooth because of smooth dependence on initial conditions. Therefore $a=a(r_0)$ is bounded both from above and below by a positive constant.

\subsection{Changing between time and radial coordinate}
\label{sec:coordinatechange}
From the Taylor expansion of $r=r(r_0, t)$ we get an important relation $t^2\approx r-r_0$ for small~$t$, i.e. there is constant $C>0$ such that
$$
\frac{1}{C}t^2\leq r-r_0 \leq Ct^2
$$
when $t$ (or equivalently $r-r_0$) is small enough. Therefore when we write estimates using the Taylor expansions it does not matter whether we express them in terms of~$t$ or $r-r_0$.

When we change between~$t$ and~$r$ we need to know how the derivatives transform. We introduce the coordinates
\begin{align*}
z=(x, t)&=(r_0, t) \\
\tilde{z}=(y, r)&=(r_0, r).
\end{align*}
Coordinate transformation $z\rightarrow\tilde{z}$ is well-defined because the Herglotz condition implies that $\partial r/\partial t>0$ on the rising part of the geodesic and $\partial r/\partial t<0$ on the descending part (see lemma~\ref{lemma:non-trapping}). Using the chain rule we see that
\begin{align*}
\frac{\partial}{\partial y}&=\frac{\partial}{\partial\tilde{z}^1}=\frac{\partial}{\partial x}+\frac{\partial t}{\partial y}\frac{\partial}{\partial t} \\
\frac{\partial}{\partial r}&=\frac{\partial}{\partial\tilde{z}^2}=\frac{\partial t}{\partial r}\frac{\partial}{\partial t}.
\end{align*}
Notice that $\partial t/\partial r=(\partial r/\partial t)^{-1}$ which can be seen for example by looking at the Jacobian matrices of the transformations $z=\Psi(\tilde{z})$ and $\tilde{z}=\Psi^{-1}(z)$.

\subsection{Proofs of the lemmas}
The strategy to prove lemmas~\ref{lemma:abelkernel} and~\ref{lemma:angularchange} is the following. We use the Taylor expansions for the coordinate functions of geodesics to calculate the derivatives with respect to the variables~$r_0$ and~$r$. By a careful treatment of the error terms we show that both of the derivatives are bounded when~$t$ (or equivalently $r-r_0$) is small. Using the mean value theorem we obtain that $K=K(r_0, r)$ and $(r_0, r)\mapsto \omega^2(r_0, r)$ are Lipschitz in a small neighborhood of the diagonal of $\Delta_R=\{(u_1, u_2)\in\R^2: R\leq u_1\leq u_2\leq 1\}$.

We start by proving lemma~\ref{lemma:abelkernel}.

\begin{proof}[Proof of lemma~\ref{lemma:abelkernel}]
We write $\dot{r}^{-1}(r_0, r):=(\dot{r}(r_0, r))^{-1}$ etc.
The leading order behaviour of the kernel~$K$ can be seen by writing the expansion
\begin{align*}
K(r_0, r)&=(r-r_0)^{1/2}\dot{r}^{-1}(r_0, r)=\bigg(\sqrt{\frac{a(r_0)}{2}}t+\bigoh(t^3)\bigg)\bigg(\frac{1}{a(r_0)t}+\bigoh(t)\bigg) \\
&=\frac{1}{\sqrt{2a(r_0)}}+\bigoh(t^2).
\end{align*}
From this we easily see that~$K$ is non-zero on the diagonal and bounded everywhere in~$\Delta_R$.

We then focus on the derivative $\partial_{r}K(r_0, r)$. Using the chain rule we get
$$
\partial_{r}K(r_0, r)=\frac{\partial}{\partial r}\big((r-r_0)^{1/2}\dot{r}^{-1}\big)
=\dot{r}^{-3}(r-r_0)^{-1/2}\bigg(\frac{1}{2}\dot{r}^2-(r-r_0)\ddot{r}\bigg).
$$
The Taylor expansions from section~\ref{sec:taylorexpansions} imply that
\begin{align*}
\frac{1}{2}\dot{r}^2&=\frac{a^2(r_0)}{2}t^2+\bigoh(t^4) \\
(r-r_0)\ddot{r}&=\frac{a^2(r_0)}{2}t^2+\bigoh(t^4) \\
\dot{r}^3&=a^3(r_0)t^3+\bigoh(t^5).
\end{align*}
From the expression for $\dot{r}^3$ we obtain
$$
\frac{1}{\dot{r}^3}=\frac{1}{a^3(r_0)t^3+\bigoh(t^3)}=\frac{1}{a^3(r_0)t^3}+\bigoh(t^{-1})=\bigoh(t^{-3}). 
$$
Thus finally we have
$$
\partial_r K(r_0, r)=\bigoh(t^{-3})\cdot \bigoh(t^{-1})\cdot\bigoh(t^4)=\bigoh(1)
$$
which implies that the derivative $\partial_r K(r_0, r)$ is
bounded when~$t$ is small, or equivalently when $r-r_0$ is small.

The estimate for the derivative $\partial_{r_0}K(r_0, r)$ is a little bit trickier. We use the coordinates $(x, t)=(r_0, t)$ and $(y, r)=(r_0, r)$ introduced in section~\ref{sec:coordinatechange}. First we obtain that
$$
\partial_y K(y, r)=\frac{\partial}{\partial y}\big((r-y)^{1/2}\dot{r}^{-1}\big)=\frac{-(r-y)^{-1/2}}{2}\dot{r}^{-1}-(r-y)^{1/2}\dot{r}^{-2}\frac{\partial \dot{r}}{\partial y}.
$$
We use the derivative transformations from section~\ref{sec:coordinatechange} to see that
$$
\frac{\partial \dot{r}}{\partial y}=\frac{\partial \dot{r}}{\partial x}+\frac{\partial t}{\partial y}\frac{\partial \dot{r}}{\partial t}=\frac{\partial \dot{r}}{\partial x}+\frac{\partial t}{\partial y}\ddot{r}
$$
where
$$
\frac{\partial \dot{r}}{\partial x}=a^{\prime}(r_0)t+\partial_{x}E_2.
$$
The Taylor expansion for~$r$ allows us to write
$$
t(y, r)=\sqrt{\frac{2}{a(y)}((r-y)-E_3(y, r))}
$$
which can be differentiated with respect to~$y$ 
\begin{equation}
\label{eq:timederivativey}
\frac{\partial t}{\partial y}=-\frac{a(y)(1+\partial_y E_3)+a^{\prime}(y)((r-y)-E_3)}{a^2(y) t}.
\end{equation}
Therefore the derivative becomes
\begin{align*}
\partial_y K(y, r)&=\frac{-(r-y)^{-1/2}}{2}\dot{r}^{-1}-(r-y)^{1/2}\dot{r}^{-2}\bigg((a^{\prime}(y)t+\partial_{x}E_2) \\&-\frac{a(y)(1+\partial_y E_3)+a^{\prime}(y)((r-y)-E_3)}{a^2(y) t}
(a(y)+E_1)\bigg).
\end{align*}
We estimate the different terms separately. 

First of all
$$
(r-y)^{1/2}\dot{r}^{-2}=\bigoh(t)\cdot\bigoh(t^{-2})=\bigoh(t^{-1}).
$$
Now $E_2(x, 0)=0$ for all $x$ and therefore $\partial_x E_2(x, 0)=0$. Compactness and\NTR{Added compactness.} smooth dependence on initial conditions give us that $\partial_x E_2$ is Lipschitz with respect to~$t$. Thus  $\partial_x E_2(x, t)=\bigoh(t)$ and
\[
(r-y)^{1/2}\dot{r}^{-2}\cdot (a^{\prime}(y)t+\partial_x E_2)=\bigoh(t^{-1})\cdot\bigoh(t)=\bigoh(1).
\]
Similarly $(r-y)=\bigoh(t^2)$ and $E_3=\bigoh(t^4)=\bigoh(t^2)$ so 
\[
(r-y)^{1/2}\dot{r}^{-2}\cdot \frac{a^{\prime}(y)((r-y)-E_3)}{a^2(y)t}=\bigoh(t^{-1})\cdot \bigoh(t)=\bigoh(1).
\]
Additionally $E_1=\bigoh(t^2)$ and
$$
(r-y)^{1/2}\dot{r}^{-2}\cdot \frac{a(y)E_1}{a^2(y)t}=\bigoh(t^{-1})\cdot \bigoh(t)=\bigoh(1).
$$

Next we take a look at the term $\partial_y E_3$. From the transformation law $\frac{\partial}{\partial y}=\frac{\partial}{\partial x}+\frac{\partial t}{\partial y}\frac{\partial}{\partial t}$ we get
\begin{align*}
\partial_y E_3&=\partial_x E_3-\partial_t E_3\frac{a(y)+a(y)\partial_y E_3+a^{\prime}(y)((r-y)-E_3)}{a^2(y)t}
\\ \Leftrightarrow \partial_y E_3&=\frac{a^2(y)t\partial_x E_3-\partial_t E_3(a(y)+a^{\prime}(y)((r-y)-E_3))}{a^2(y)t+a(y)\partial_t E_3}.
\end{align*}
Now $\partial_x E_2=\bigoh(t)$ and
\[
E_{i}(x, t)=\int_0^t E_{i-1}(x, s)\text{d}s, \quad i\in\{2, 3\}.
\]
By smoothness we have
\[
\partial_x E_3(x, t)=\int_0^t\partial_x E_2(x, s)\text{d}s
\]
so $\partial_x E_3=\bigoh(t^2)$. Also $\partial_t E_3=E_2=\bigoh(t^3)$ and we have an estimate for the derivative
\begin{align*}
\partial_y E_3=\frac{\bigoh(t^3)}{a^2(y)t+a(y)\partial_t E_3}=\bigoh(t^2)(1+\bigoh(t^2))=\bigoh(t^2).
\end{align*}
Thus 
$$
(r-y)^{1/2}\dot{r}^{-2}\cdot \frac{a(y)\partial_y E_3}{a^2(y)t}=\bigoh(t^{-1})\cdot \bigoh(t)=\bigoh(1).
$$

Finally the $y$-derivative of the integral kernel is
\begin{align*}
\partial_y K(y,r )&=\frac{-(r-y)^{-1/2}}{2}\dot{r}^{-1}+(r-y)^{1/2}\dot{r}^{-2}\frac{1}{t}+\bigoh(1)\\ &=\dot{r}^{-2}(r-y)^{-1/2}t^{-1}\bigg(-\frac{\dot{r}t}{2}+(r-y)\bigg)+\bigoh(1) \\
&=\dot{r}^{-2}(r-y)^{-1/2}t^{-1}\bigg(-\frac{a(y)t^2}{2}-\frac{tE_2}{2}+\frac{a(y)t^2}{2}+E_3\bigg)+\bigoh(1) \\ &= \bigoh(t^{-4})\cdot \bigoh(t^4)+\bigoh(1)=\bigoh(1).
\end{align*}
This implies that $\partial_{r_0}K(r_0, r)$ is bounded for small~$t$, or equivalently for small $r-r_0$.

Let $\epsilon>0$ be small enough so that both $\partial_{r}K(r_0, r)$ and $\partial_{r_0}K(r_0, r)$ are bounded when $r-r_0<\epsilon$. Now if $(r_0, r), (\widetilde{r_0}, \widetilde{r})\in\Delta_R^\epsilon=\{(u_1, u_2)\in\Delta_R: u_2-u_1<\epsilon\}$, then the mean value theorem implies that
\begin{align*}
\abs{K(r_0, r)-K(\widetilde{r_0}, \widetilde{r})}&\leq\abs{\nabla K(\widehat{r_0}, \widehat{r})}\abs{(r_0, r)-(\widetilde{r_0}, \widetilde{r})} \\
&\leq M(\abs{\partial_{r_0}K(\widehat{r_0}, \widehat{r})}+\abs{\partial_r K(\widehat{r_0}, \widehat{r})})\abs{(r_0, r)-(\widetilde{r_0}, \widetilde{r})} \\
&\leq \widetilde{M}\abs{(r_0, r)-(\widetilde{r_0}, \widetilde{r})}.
\end{align*}
Here we used the fact that the point $(\widehat{r_0}, \widehat{r})$ belongs to the segment connecting $(r_0, r)$ and $(\widetilde{r_0}, \widetilde{r})$ so $\widehat{r}-\widehat{r_0}<\epsilon$ since $\Delta_R^\epsilon$ is a convex set. Therefore $K=K(r_0, r)$ is Lipschitz in a small neighborhood of the diagonal of~$\Delta_R$.
\end{proof}

Next we prove lemma~\ref{lemma:angularchange}. The proof is similar to the proof of lemma~\ref{lemma:abelkernel}.

\begin{proof}[Proof of lemma~\ref{lemma:angularchange}]
Using fundamental theorem of calculus we can write
$$
\omega(r_0, t)=\theta(r_0, t)-\theta_0=\int_0^t\dot{\theta}(r_0, s)\text{d}s=\int_{r_0}^r\dot{\theta}(r_0, u)\dot{r}^{-1}(r_0, u)\text{d}u.
$$
Since $\dot{r}^{-1}(r_0, t)=\bigoh(t^{-1})$ and $\dot{\theta}(r_0, t)=\bigoh(1)$ (by compactness and\NTR{Added compactness.} smooth dependence on initial conditions) we have by the chain rule
$$
\partial_r \omega(r_0, r)=\dot{\theta}(r_0, r)\dot{r}^{-1}(r_0, t)=\bigoh(1)\cdot\bigoh(t^{-1})=\bigoh(t^{-1}).
$$
Using $\dot{r}^{-1}=\bigoh(t^{-1})$ and $t^{-1}\approx (r-r_0)^{-1/2}$ for small~$t$ we obtain
$$
|\omega(r_0, r)|\leq M\int_{r_0}^r|\dot{r}^{-1}(r_0, u)|\text{d}u\leq \widehat{M}\int_{r_0}^r(u-r_0)^{-1/2}\text{d}u=\widetilde{M}(r-r_0)^{1/2}
$$
for small $r-r_0$ which implies $\omega(r_0, r)=\bigoh(t)$. Hence
$$
\partial_r\omega^2(r_0, r)=2\omega(r_0, r)\partial_r\omega(r_0, r)=\bigoh(t)\cdot\bigoh(t^{-1})=\bigoh(1).
$$
Thus the derivative $\partial_r\omega^2(r_0, r)$ is bounded for small~$t$ (or for small $r-r_0$).

For the derivative $\partial_{r_0}\omega(r_0, r)$ we write
\begin{align*}
\omega(r_0, r)=\int_{r_0}^r\dot{\theta}(r_0, u)(u-r_0)^{-1/2}K(r_0, u)\text{d}u=\int_{r_0}^r(u^2-r_0^2)^{-1/2}\varphi(r_0, u)\text{d}u
\end{align*}
where
$$
\varphi(r_0, r)=\dot{\theta}(r_0, r)K(r_0, r)(r+r_0)^{1/2}.
$$
The weak reversibility condition $\ddot{\theta}(r_0, 0)=0$ implies that
$$
\ddot{\theta}(r_0, t)=\int_0^t\dddot{\theta}(r_0, s)\text{d}s
$$
so $\ddot{\theta}(r_0, t)=\bigoh(t)$ since $\dddot{\theta}(r_0, t)=\bigoh(1)$ by compactness and\NTR{Added compactness.} smooth dependence on initial conditions. The transformation rules for the derivatives imply that
$$
\partial_r\dot{\theta}(y, r)=\dot{r}^{-1}(y, r)\ddot{\theta}(y, r)=\bigoh(t^{-1})\cdot\bigoh(t)=\bigoh(1)
$$
and
$$
\partial_y\dot{\theta}(y, r)=\partial_x\dot{\theta}(x, t)+\frac{\partial t}{\partial y}\ddot{\theta}(y, r)=\bigoh(1)+\bigoh(t^{-1})\cdot \bigoh(t)=\bigoh(1).
$$
Here we used the calculations from the proof of lemma~\ref{lemma:abelkernel} to deduce that $\frac{\partial t}{\partial y}=\bigoh(t^{-1})$ (see equation~\eqref{eq:timederivativey}) and $\partial_x\dot{\theta}(x, t)=\bigoh(1)$ by compactness and\NTR{Added compactness.} smooth dependence on initial conditions.
Thus $\dot{\theta}=\dot{\theta}(r_0, r)$ is Lipschitz for small $r-r_0$. Because $K=K(r_0, r)$ and $(r_0, r)\mapsto (r+r_0)^{1/2}$ are Lipschitz for small $r-r_0$ also the map $\varphi=\varphi(r_0, r)$ is Lipschitz for small $r-r_0$ (the terms in~$\varphi$ are bounded). Therefore the derivatives $\partial_{r_0}\varphi$ and $\partial_r\varphi$ are bounded for small $r-r_0$ and $\varphi=\bigoh(1)$.

The derivative $\partial_{r_0}\omega (r_0, r)$ becomes (see e.g.~\cite[Proposition 15]{deI:abel-transforms-x-ray-tomography}) 
\begin{align*}
\partial_{r_0}\omega (r_0, r)&=\int_{r_0}^r(u^2-r_0^2)^{-1/2}\bigg(\partial_{r_0}\varphi(r_0, u)-\frac{r_0}{u^2}\varphi(r_0, u)+\frac{r_0}{u}\partial_u\varphi(r_0, u)\bigg)\text{d}u \\
&-r_0(r^2-r_0^2)^{-1/2}\varphi(r_0, r).
\end{align*}
The term inside the big parenthesis is $\bigoh(1)$ because $\varphi$ and its derivatives are bounded and $u\geq r_0>R>0$. Hence the term coming from the integral is~$\bigoh(t)$. The latter term is $\bigoh(t^{-1})$ and thus $\partial_{r_0}\omega(r_0, r)=\bigoh(t^{-1})$. Finally we have
$$
\partial_{r_0}\omega^2(r_0, r)=2\omega(r_0, r)\partial_{r_0}\omega(r_0, r)=\bigoh(t)\cdot\bigoh(t^{-1})=\bigoh(1)
$$
which implies that the derivative $\partial_{r_0}\omega^2(r_0, r)$ is bounded for small~$t$, or equivalently for small $r-r_0$. The Lipschitz continuity of $(r_0, r)\mapsto \omega^2(r_0, r)$  in a small neighborhood of the diagonal of~$\Delta_R$ then follows from the mean value theorem as in the proof of lemma~\ref{lemma:abelkernel}.
\end{proof}

\section{Linearized travel time tomography on Finsler manifolds}
\label{sec:linearizedelastic}
In this section we consider the linearization of the boundary rigidity problem on Finsler manifolds: if two Finsler norms give the same distances between boundary points, are they equal up to a gauge? 
We show that the linearization of boundary distances for a general family of Finsler norms leads to the geodesic ray transform of a function on the sphere bundle~$SM$. We also show that if the family of Finsler norms arises from conformal variations, then linearization leads to the geodesic ray transform of scalar fields on~$M$. 
This implies that if the geodesic ray transform is injective on scalar fields, then we have boundary rigidity in the first order approximation.

The linearization of the boundary rigidity problem has been done earlier for Riemannian metrics for example in~\cite[Section 3.1]{SHA-ray-transform-riemannian-manifold}. We show that the Riemannian linearization result follows from the linearization of Finsler norms as a special case.

\subsection{Linearization for general Finsler norms}
Let~$M$ be a smooth compact manifold with boundary~$\partial M$.
Let $x, x'\in\partial M$, $\epsilon>0$ and $s\in (-\epsilon, \epsilon)$. Assume that we have a family of curves $\gamma_s\colon[0, T]\rightarrow M$ smoothly depending on~$s$ and connecting~$x$ to~$x'$ such that each~$\gamma_s$ is a unit speed geodesic of a Finsler norm~$F_s$. We denote by $\dot{\gamma}_s$ the derivative of~$\gamma_s=\gamma_s(t)$ with respect to~$t$. Let $d_{F_s}(x, x')$ be the length of the geodesic~$\gamma_s$ with respect to~$F_s$, i.e. we assume that~$\gamma_s$ minimizes the distance from~$x$ to~$x'$.

The derivative of $d_{F_s}(x, x')$ with respect to the parameter~$s$ at zero is
\begin{align}
\frac{\partial d_{F_s}(x, x')}{\partial s}\bigg|_{s=0}=\int_0^T\frac{\partial}{\partial s}(F_s(\gamma_s(t), \dot{\gamma}_s(t)))\bigg|_{s=0}\der t.
\end{align}
A calculation shows that
\begin{align*}
\frac{\partial}{\partial s}(F_s(\gamma_s(t), \dot{\gamma}_s(t)))\bigg|_{s=0}=\frac{\partial F_s(\gamma_0(t), \dot{\gamma}_0(t))}{\partial s}\bigg|_{s=0}+\frac{\partial}{\partial s}(F_0(\gamma_s(t), \dot{\gamma}_s(t)))\bigg|_{s=0}
\end{align*}
and we obtain
\begin{align*}
\frac{\partial d_{F_s}(x, x')}{\partial s}\bigg|_{s=0}&=\int_0^T\frac{\partial F_s(\gamma_0(t), \dot{\gamma}_0(t))}{\partial s}\bigg|_{s=0}\der t+\frac{\partial}{\partial s}\int_0^T F_0(\gamma_s(t), \dot{\gamma}_s(t))\der t\bigg|_{s=0}.
\end{align*}
The second term vanishes since~$\gamma_0$ is a geodesic of~$F_0$ and hence a critical point of the length functional. Thus we obtain
\begin{equation}
\frac{\partial d_{F_s}(x, x')}{\partial s}\bigg|_{s=0}=\geod_{SM} h(\gamma_0)
\end{equation}
where the function $h\colon SM\rightarrow \R$ is defined as
\begin{equation}
\label{eq:Fs-variation}
h(x, y)=\frac{\partial F_s(x, y)}{\partial s}\bigg|_{s=0}
\end{equation}
and~$\geod_{SM}$ is the geodesic ray transform on the sphere bundle~$SM$.

If $F_s(x, y)=\sqrt{g^s_{ij}(x)y^iy^j}$ where $g^s=g^s_{ij}(x)$ is a family of Riemannian metrics, then
\begin{align*}
\frac{\partial F_s(\gamma_0(t), \dot{\gamma}_0(t))}{\partial s}\bigg|_{s=0}&=\frac{1}{2\sqrt{g^0_{ij}(\gamma_0(t))\dot{\gamma}_0^i(t)\dot{\gamma}_0^j(t)}}\frac{\partial g^s_{ij}(\gamma_0(t))}{\partial s}\bigg|_{s=0}\dot{\gamma}_0^i(t)\dot{\gamma}_0^j(t) \\
&=\frac{1}{2}\frac{\partial g^s_{ij}(\gamma_0(t))}{\partial s}\bigg|_{s=0}\dot{\gamma}_0^i(t)\dot{\gamma}_0^j(t)
\end{align*}
where we used the fact that~$\gamma_0$ is a unit speed geodesic of~$g^0$. Hence 
\begin{align}
\frac{\partial d_{g_s}(x, x')}{\partial s}\bigg|_{s=0}=\geod_2 h(\gamma_0)
\end{align}
where the components of the 2-tensor field~$h=h_{ij}(x)$ are
\begin{equation}
\label{eq:h-Riemann}
h_{ij}(x)=\frac{1}{2}\frac{\partial g^s_{ij}(x)}{\partial s}\bigg|_{s=0}
\end{equation}
and~$\geod_2$ is the geodesic ray transform of 2-tensor fields.

If there are no constraints on the family~$F_s$ of Finsler geometries, then any smooth function $h\colon SM\to\R$ can be realized as a variation in the sense of equation~\eqref{eq:Fs-variation}.
Therefore the linearized problem in general Finsler geometry is that of finding the kernel of~$\geod_{SM}$.
This kernel characterization in the Finsler setting is surprisingly simple\NTR{Reworded to be more specific on what is simple. Cf. next paragraph.}: $\geod_{SM}h=0$ if and only if $h=Xu$ for a smooth function $u\colon SM\to\R$ with $u|_{\partial SM}=0$, where~$X$ is the geodesic vector field.
The claim can be proved by defining~$u$ to be the integral of~$h$ over forward geodesics.
Constraints on~$h$ induce constraints on the potential~$u$ as is the case in Riemannian linearizations and tomography of 2-tensors.

\NTR{Added this paragraph to compare the natures of various linearized travel time problems.}
If one studies the Riemannian version of linearized travel time tomography, then the deformation has the special form $h(x,y)=h_{ij}(x)y^iy^j$ with coefficients as in~\eqref{eq:h-Riemann}.
This structure of $h\colon SM\to\R$ (rank two tensor field) implies a special structure for $u\colon SM\to\R$ (rank one tensor field), and it is proving this structural implication that makes the Riemannian problem hard in comparison to the general Finslerian one.
See \cite{IM:integral-geometry-review,PSU-tensor-tomography-progress,SHA-integral-geometry-tensor-fields} for Riemannian results.
If one studies the linearized problem in the family of Finsler metrics arising from elasticity (see section~\ref{subsec:elastictraveltime}), the difficulty returns: one needs a structural characterization of possible variation fields~$h$ and the corresponding structure for the potential~$u$.
Neither of these structures is known in the general elastic setting.

\subsection{Linearization for conformal variations}
\label{sec:conformal}
Let us consider the case $F_s(x, y)=c_s(x) F_0(x, y)$ where $c_s=c_s(x)$ is a family of positive functions on~$M$ such that $c_0\equiv 1$ and~$F_0$ is some fixed Finsler norm. Now
\begin{align}
\frac{\partial F_s(\gamma_0(t), \dot{\gamma}_0(t))}{\partial s}\bigg|_{s=0}=\frac{\partial c_s(\gamma_0(t))}{\partial s}\bigg|_{s=0}
\end{align}
where we used the fact that $F_0(\gamma_0(t), \dot{\gamma}_0(t))=1$ since~$\gamma_0$ is a unit speed geodesic of~$F_0$. We obtain
\begin{align}
\frac{\partial d_{F_s}(x, x')}{\partial s}\bigg|_{s=0}=\geod f(\gamma_0)
\end{align}
where the function $f\colon M\rightarrow\R$ is defined as
\begin{equation}
f(x)=\frac{\partial c_s(x)}{\partial s}\bigg|_{s=0}.
\end{equation}
Hence the linearization of boundary distances of conformal family of Finsler norms leads to the geodesic ray transform of scalar fields on the Finsler manifold $(M, F_0)$. 

If all the geodesics~$\gamma_s$ give the same boundary distances $d_{F_s}(x, x')$, then the derivative with respect to~$s$ vanishes and
\begin{equation}
\geod f(\gamma_0)=0
\end{equation}
for all geodesics~$\gamma_0$ of~$F_0$ connecting two points on the boundary. If the geodesic ray transform is injective on $(M, F_0)$, then to first order in~$s$ we have
\begin{equation}
F_s\approx F_0+s\cdot\frac{\partial F_s}{\partial s}\bigg|_{s=0}=F_0
\end{equation}
since the derivative satisfies
\begin{equation}
\frac{\partial F_s(x, y)}{\partial s}\bigg|_{s=0}=\frac{\partial c_s(x)}{\partial s}\bigg|_{s=0}F_0(x, y)=f(x)F_0(x, y)=0
\end{equation}
where we used the fact that~$f=0$ whenever~$\geod$ is injective on $(M, F_0)$. In this case we have boundary rigidity up to first order in the parameter~$s$.

\subsection{Conformally linearized elastic travel time tomography}
\label{subsec:elastictraveltime}
Next we consider the travel time tomography problem in~$\R^3$ arising in elasticity. Basic theory of elasticity can be found for example in~\cite{CE-seismic-ray-theory, SHE-introduction-to-seismology}. The stiffness tensor $c_{ijkl}=c_{ijkl}(x)$ describes the elastic properties of a given material. The stiffness tensor has the symmetries
\begin{equation}
c_{ijkl}=c_{jikl}=c_{klij}.
\end{equation}
The density-normalized elastic modulus is 
\begin{equation}
a_{ijkl}(x)=\frac{c_{ijkl}(x)}{\rho(x)}
\end{equation}
where $\rho=\rho(x)$ is the density of the material. 

If~$p$ is the momentum covector, then the Christoffel matrix is $\Gamma_{il}(x, p)=\sum_{j, k}a_{ijkl}(x)p_jp_k$. The Christoffel matrix is symmetric and we also assume that it is positive definite so it has three positive eigenvalues $\lambda_i=\lambda_i(x, p)$ where $i\in\{1, 2, 3\}$. Let us assume that $\lambda_1>\lambda_i$ for $i\in\{2, 3\}$. It was shown in~\cite{deILS-finsler-boundary-distance-map} that $\sqrt{\lambda_1(x, p)}$ defines a co-Finsler norm in~$T^*\R^3$. Using the Legendre transformation we obtain a Finsler norm in~$T\R^3$.

Assume that the stiffness tensor $c_{ijkl}=c_{ijkl}(x)$ is fixed and consider the conformal variations $c^s_{ijkl}(x)=f_s(x)c_{ijkl}(x)$ where $f_s=f_s(x)$ is a smooth family of positive functions such that $f_0\equiv 1$ (i.e. we have a family of ``factorized anisotropic inhomogeneous media", see e.g.~\cite{CE-anisotropic-factorized, CE-anisotropic-factorized-2, CE-seismic-ray-theory, YN-finsler-seismic-ray-path}). The density-normalized elastic modulus becomes
\begin{equation}
a^s_{ijkl}(x)
=
\frac{c^s_{ijkl}(x)}{\rho(x)}
=
f_s(x)a_{ijkl}(x).
\end{equation}
Thus we have a family of Christoffel matrices $\Gamma^s_{il}(x, p)=f_s(x)\Gamma_{il}(x, p)$. Since the eigenvalues only get scaled by $f_s$, the largest eigenvalue corresponds to $\lambda^s_1(x, p)=f_s(x)\lambda_1(x, p)$. We obtain a family of co-Finsler norms $F_s^*(x, p)=\sqrt{f_s(x)}F^*(x, p)$ where~$F^*$ is the co-Finsler norm corresponding to the stiffness tensor~$c_{ijkl}$. Now as the Legendre transformation acts fiberwise, we obtain a family of Finsler norms $F_s(x, y)=\sqrt{f_s(x)}F(x, y)$ where~$F$ is the Legendre transformation of~$F^*$. 

We have shown that conformal variations of the stiffness tensor leads to conformal variations of the Finsler norm induced by the background stiffness tensor. If we consider the travel time tomography or the boundary rigidity problem for the family of induced Finsler norms~$F_s$, then using the observations done in section~\ref{sec:conformal} we obtain that 
\begin{equation}
\frac{\partial f_s(x)}{\partial s}\bigg|_{s=0}=0
\end{equation}
whenever the geodesic ray transform is injective on scalar fields on the base manifold $(M, F)$. This in turn implies that the stiffness tensors~$c^s_{ijkl}=f_sc_{ijkl}$ all agree to first order in~$s$, i.e.
\begin{equation}
c^s_{ijkl}
\approx
c_{ijkl}+s\cdot\frac{\partial c^s_{ijkl}}{\partial s}\bigg|_{s=0}
=
c_{ijkl}.
\end{equation}

We note that it was shown in~\cite{dESUZ-generic-uniqueness-mixed-ray} that the linearization of the elastic travel time tomography problem for a family of isotropic stiffness tensors leads to the geodesic ray transform of scalar fields on Riemannian manifolds (and more generally to an integral geometry problem of 4-tensor fields). Our conformal linearization allows general anisotropies for~$c_{ijkl}^s$ (the background stiffness tensor~$c_{ijkl}$ can be anisotropic) and therefore the geometry is Finslerian; in ~\cite{dESUZ-generic-uniqueness-mixed-ray} the authors mainly study perturbations around isotropic elasticity (weakly anisotropic medium). Other difference is that our linearization applies to qP-waves and the linearization in~\cite{dESUZ-generic-uniqueness-mixed-ray} to S-waves and qS-waves. For more linearization results in elastic travel time tomography see~\cite{dESUZ-generic-uniqueness-mixed-ray, deSZ-mixed-ray, SHA-integral-geometry-tensor-fields}.\NTR{Updated references (book by Paternain, Salo and Uhlmann and article by Kurusa and Ódor). Also removed an unpublished article by Lehtonen and added references~\cite{HMS:attenuated, PSU:attenuated} on the attenuated ray transform.}

\bibliography{refs} 
\bibliographystyle{abbrv}
\end{document}